\documentclass[reqno, 11pt]{amsart}
%
%
\usepackage{amsmath,mathrsfs}
\usepackage{amsfonts}
\usepackage{amssymb}
\usepackage{graphicx,color}
\usepackage[square,numbers]{natbib}
\usepackage{verbatim}
%

\newtheorem{theorem}{Theorem}
\newtheorem{claim}[theorem]{Claim}
\newtheorem{corollary}[theorem]{Corollary}
\newtheorem{lemma}[theorem]{Lemma}
\newtheorem{proposition}[theorem]{Proposition}

\theoremstyle{definition}
\newtheorem{definition}[theorem]{Definition}

\theoremstyle{remark}
\newtheorem{remark}[theorem]{Remark}

\numberwithin{theorem}{section}
\numberwithin{equation}{section}

\def\XXint#1#2#3{{\setbox0=\hbox{$#1{#2#3}{\int}$}
     \vcenter{\hbox{$#2#3$}}\kern-.5\wd0}}

\newcommand{\dd}{\; \mathrm{d}}

\newcommand{\bbG}{\mathbb{G}}
\newcommand{\bbR}{\mathbb{R}}
\newcommand{\bbN}{\mathbb{N}}

\begin{document}
\title[Structure of Porous Sets in Carnot Groups]{Structure of Porous Sets in Carnot Groups}
\author[Andrea Pinamonti]{Andrea Pinamonti}
\address[Andrea Pinamonti]{Department of Mathematics, University of Trento, Via Sommarive 14, 38050 Povo (Trento), Italy}
\email{Andrea.Pinamonti@gmail.com}
\author[Gareth Speight]{Gareth Speight}
\address[Gareth Speight]{Department of Mathematical Sciences, University of Cincinnati, 2815 Commons Way, Cincinnati, OH 45221, United States}
\email{Gareth.Speight@uc.edu}

\date{\today}

\renewcommand{\subjclassname} {\textup{2010} Mathematics Subject Classification}
\subjclass[]{28A75, 43A80, 49Q15, 53C17}

\keywords{Porous set, Carnot group, Pansu differentiability}

\begin{abstract}
We show that any Carnot group contains a closed nowhere dense set which has measure zero but is not $\sigma$-porous with respect to the Carnot-Carath\'eodory (CC) distance. In the first Heisenberg group we observe that there exist sets which are porous with respect to the CC distance but not the Euclidean distance and vice-versa. In Carnot groups we then construct a Lipschitz function which is Pansu differentiable at no point of a given $\sigma$-porous set and show preimages of open sets under the horizontal gradient are far from being porous.
\end{abstract}

\maketitle

\section{Introduction}\label{intro}

A Carnot group (Definition \ref{Carnotdef}) is a simply connected Lie group whose Lie algebra admits a stratification. Carnot groups have translations, dilations, Haar measure and points are connected by horizontal curves (Definition \ref{horizontalcurve}), which are used to define the Carnot-Carath\'eodory (CC) distance (Definition \ref{ccdistance}). With so much structure, the study of analysis and geometry in Carnot groups is an active and interesting research area \cite{BLU07, CDPT07, Gro96, Mon02, SC16, Vit14}. 

Many interesting geometric and analytic problems have been studied in the context of Carnot groups. For example, a geometric notion of intrinsic Lipschitz function between subgroups of a general Carnot group was introduced in \cite{FSC2} to study rectifiable sets \cite{FSSC2,Mag} and minimal surfaces \cite{CMPSC1,CMPSC2,SCV}. Moreover, Carnot groups have been applied to study degenerate equations, control theory and potential theory \cite{BLU07}. These problems are highly non-trivial due to the complexity of the geometry of Carnot groups. For instance, any Carnot group (except for Euclidean spaces themselves) contains no subset of positive measure that is bi-Lipschitz equivalent to a subset of a Euclidean space \cite{Sem96}. This follows from Pansu's theorem (Theorem \ref{PansuThm}), a generalization of Rademacher's theorem asserting that Lipschitz maps between Carnot groups are differentiable almost everywhere \cite{Pan89, Mag01}. 

A set in a metric space is (upper) porous (Definition \ref{def_porous}) if each of its points sees nearby relatively large holes in the set on arbitrarily small scales. A set is $\sigma$-porous if it is a countable union of porous sets. Properties and applications of porous sets are surveyed in \cite{Zaj87, Zaj05}. Porous sets have applications to differentiability in the linear setting. For example, they were recently used in \cite{PS15} to show that if $n>1$ then there exists a Lebesgue null set $N\subset \mathbb{R}^{n}$ such that every Lipschitz map $f\colon \bbR^{n}\to \bbR^{n-1}$ is differentiable at a point of $N$. Applications of porosity to differentiability also exist in the non-linear setting of Carnot groups. For instance, \cite{PS16.2} showed that if $\bbG$ is a Carnot group and $f\colon \bbG \to \bbR$ is a Lipschitz map, then there exists a $\sigma$-porous set $A$ such that if $f$ is differentiable at $x\in \bbG\setminus A$ in all horizontal directions then $f$ is Pansu differentiable at $x$. Hence it is also interesting to study porous sets and their applications in Carnot groups.

In Section \ref{porousproperties} we investigate the structure of porous sets themselves in Carnot groups. Every $\sigma$-porous set in a metric space is of first category, which means it is a countable union of nowhere dense sets. In $\bbR^{n}$, $\sigma$-porous sets also have Lebesgue measure zero. However, proving a set is $\sigma$-porous gives a stronger result than proving it is of first category or has measure zero: in $\bbR^{n}$ there exists a closed nowhere dense set of Lebesgue measure zero which is not $\sigma$-porous \cite[Theorem 2.3]{Zaj05}. We show there is a natural analogue of this result in Carnot groups (Theorem \ref{poroussmall}).

Any Carnot group can be represented in coordinates as a Euclidean space $\bbR^n$ equipped with some additional structure. Hence one can compare porosity with respect to the CC and Euclidean distances. We show that, at least in the first Heisenberg group, the two notions differ: for each distance, one can construct a set which is porous with respect to the given distance but not porous with respect to the other distance (Proposition \ref{enotcc} and Proposition \ref{ccnote}). This does not follow immediately from the fact that the two distances are not Lipschitz equivalent: if $(M,d)$ is a metric space then the fractal metric $d^{\varepsilon}$, $0<\varepsilon<1$, need not be Lipschitz equivalent to $d$ but gives the same family of porous sets (Remark \ref{sameporoussets}).

In Section \ref{porousnondifferentiability} we give another connection between porosity and differentiability in Carnot groups. We adapt Euclidean arguments from \cite{HMZW97} to show that for any $\sigma$-porous set $P$ in a Carnot group $\bbG$, one can find a real-valued Lipschitz function on $\bbG$ which is not even Pansu subdifferentiable on $P$ (Proposition \ref{nonsubdiff}). As a consequence, a universal differentiability set in a Carnot group cannot be $\sigma$-porous (Corollary \ref{udsnotporous}). Universal differentiability sets are sets which contain a point of differentiability for every real-valued Lipschitz function. Such sets were investigated in \cite{DM11, DM12, DM14} in Euclidean spaces and in \cite{PS16.1} in the Heisenberg group.

In Section \ref{gradientproblem} we use the above mentioned non-differentiable functions and arguments from \cite{HMZW97} to investigate the horizontal gradient. The horizontal gradient is defined like the ordinary gradient in Euclidean spaces, but using only horizontal directional derivatives. We show that, as in the Euclidean case, the preimage of an open set under the horizontal gradient mapping is either empty or far from being porous (Theorem \ref{gradienttheorem}). In the Euclidean case this result is related to the so-called \emph{gradient problem}. Assume $n\geq 2$, $\Omega\subset \bbR^n$ is an open set and $f: \Omega\to \bbR$ is an everywhere differentiable function. The gradient problem asks whether it is true or not that fixed $\tilde\Omega\subset\bbR^n$ open, the set $\{p\in\Omega\ |\ \nabla f(p)\in \tilde\Omega\}$ is either empty or of positive $n$-dimensional Lebesgue measure. It is known that the answer is affirmative for $n=1$ \cite{Cla, D} and negative in higher dimensions \cite{Buc}. We are not aware of any result of this type in the context of Carnot groups.

\medskip

\noindent \textbf{Acknowledgement.} 
Part of this work was carried out while A. P. was a postdoctoral researcher at the University of Bologna and G. S. was supported by a Taft Summer Research Fellowship from the Charles Phelps Taft Research Center. The project was finalized when A. P. visited the Department of Mathematical Sciences of the University of Cincinnati. A.P. wishes to express his gratitude to the University of Cincinnati for the stimulating atmosphere and the excellent working conditions. 

The authors thank helpful referees for valuable remarks and corrections.

\section{Preliminaries}\label{prelim}
In this section we define Carnot groups and porous sets. We refer the interested reader to \cite{BLU07, SC16} for an introduction to Carnot groups and \cite{Zaj87, Zaj05} for information about porosity.

\subsection{Carnot Groups}
\begin{definition}\label{Carnotdef}
A simply connected finite dimensional Lie group $\bbG$ is a \emph{Carnot group of step $s$} if its Lie algebra $(\mathfrak{g}, [\cdot,\cdot])$ is \emph{stratified of step $s$}, which means that there exist non-zero linear subspaces $V_1, \ldots ,V_s$ of $\mathfrak{g}$ such that
\[\mathfrak{g}=V_1\oplus \cdots \oplus V_s,\]
with
\[[V_1,V_{i}]=V_{i+1} \mbox{ if }1\leq i\leq s-1, \mbox{ and } [V_1,V_s]=\{0\}.\]
Here
\[[V,W]=\mathrm{span}\{[X,Y]: X\in V,\ Y\in W\},\]
where $[X, Y]$ denotes the Lie bracket in the Lie algebra.
\end{definition}

\subsection{Coordinates on Carnot Groups}
The \emph{exponential map} $\exp \colon \mathfrak{g}\to \bbG$ is defined by $\exp(X)=\gamma(1)$, where
$\gamma \colon[0,1]\to \bbG$ is the unique solution to the initial value problem:
\[\gamma'(t)=X(\gamma(t)),\qquad \gamma(0)=0.\]
The exponential map $\exp\colon \mathfrak{g}\to \bbG$ is a diffeomorphism. Throughout the paper we fix a basis $X_{1}, \ldots, X_{n}$ of $\mathfrak{g}$ adapted to the stratification, in which a basis of $V_{i+1}$ follows a basis of $V_{i}$ for each $i$. Define $m=\dim(V_1)$ and notice $X_{1}, \ldots, X_{m}$ is then a basis of $V_1$. Any $x\in \bbG$ can be written uniquely as
\[x=\exp(x_{1}X_{1}+\ldots +x_{n}X_{n})\]
for some $x_{1},\ldots, x_{n}\in \bbR$. We identify $x$ with $(x_{1},\ldots, x_{n})\in \bbR^{n}$ and $\bbG$ with $(\bbR^{n},\cdot)$, where the group operation on $\bbR^{n}$ is determined by the Baker-Campbell-Hausdorff formula on $\mathfrak{g}$. This is known as \emph{exponential coordinates of the first kind}. With this identification the Lebesgue measure $\mathcal{L}^{n}$ on $\bbR^n$ is a \emph{Haar~measure} on $\mathbb{G}$.

For any $\lambda>0$, the \emph{dilation} $\delta_{\lambda}\colon \mathfrak{g}\to \mathfrak{g}$ is defined as the linear map satisfying $\delta_{\lambda}(X)=\lambda^{i}X$ whenever $X\in V_{i}$. Dilations are extended to $\bbG$ so as to satisfy the equality $\exp \circ \delta_{\lambda} = \delta_{\lambda}\circ \exp$. Dilations satisfy the equality $\delta_{\lambda}(xy)=\delta_{\lambda}(x)\delta_{\lambda}(y)$ for $x, y\in \bbG$ and $\lambda>0$.

Define the \emph{projection} $p\colon \bbR^{n} \to \bbR^{m}$ by $p(x_1,\ldots, x_n)=(x_{1}, \ldots, x_{m})$. 

\subsection{Carnot-Carath\'eodory Distance}

\begin{definition}\label{horizontalcurve}
An absolutely continuous curve $\gamma\colon [a,b]\to \bbG$ is \emph{horizontal} if there exist $u_{1}, \ldots, u_{m}\in L^{1}[a,b]$ such that for almost every $t\in [a,b]$:
\[\gamma'(t)=\sum_{i=1}^{m}u_{i}(t)X_{i}(\gamma(t)).\]
Define the horizontal length of such a curve $\gamma$ by $L(\gamma)=\int_{a}^{b}|u(t)|\dd t$, where $u=(u_{1}, \ldots, u_{m})$ and $|\cdot|$ denotes the Euclidean norm on $\bbR^{m}$.
\end{definition}

\begin{definition}\label{ccdistance}
The \emph{Carnot-Carath\'eodory distance (CC distance)} between points $x, y \in \bbG$ is defined by:
\[d_{c}(x,y)=\inf \{ L(\gamma): \, \gamma \colon [0,1]\to \bbG \mbox{ horizontal joining }x\mbox{ to }y \}.\]
\end{definition}

The Chow-Rashevskii Theorem implies that any two points of $\bbG$ can be connected by horizontal curves \cite[Theorem 9.1.3]{BLU07}. It follows that the CC distance is indeed a distance on $\bbG$. The following identities hold for $x, y, z\in \bbG$ and $r>0$: 
\[d_{c}(zx,zy)=d_{c}(x,y), \qquad d_{c}(\delta_{r}(x),\delta_{r}(y))=rd_{c}(x,y).\]
For brevity we write $d_{c}(x)$ instead of $d_{c}(x,0)$. Since $\bbG$ is identified with $\bbR^{n}$, we may compare the CC distance $d_{c}$ with the Euclidean distance $d_{e}$. These distances induce the same topology on $\bbR^{n}$ but are not Lipschitz equivalent.

\subsection{Pansu Differentiability and Directional Derivatives}

\begin{definition}\label{subdiff}
A map $L\colon \bbG \to \bbR$ is \emph{group linear} if the following identities hold for every $x, y\in \bbG$ and $r>0$:
\[L(xy)=L(x)+L(y), \qquad L(\delta_{r}(x))= r L(x).\]

A map $f\colon \bbG \to \bbR$ is \emph{Pansu subdifferentiable} at $x_0\in \bbG$ if there exists a group linear map $L\colon \bbG \to \bbR$ such that
\begin{equation}\label{subdiff2}\liminf_{h\to 0} \frac{f(x_{0}h)-f(x_{0})-L(h)}{d_{c}(h)}\geq  0.\end{equation}

Such a map $f$ is \emph{Pansu differentiable} at $x_0$ if there exists a group linear map $L\colon \bbG\to \bbR$ such that
\[\lim_{h\to 0} \frac{f(x_{0}h)-f(x_{0})-L(h)}{d_{c}(h)}= 0.\]

\end{definition}

If the map $L$ in the definition of Pansu differentiability exists then it is unique, called the \emph{Pansu differential}, and we denote it by $df(x_{0})$. For simplicity we now state the celebrated Pansu theorem for scalar targets, though a similar result holds when the target is any Carnot group \cite{Pan89}.

\begin{theorem}[Pansu's Theorem]\label{PansuThm}
Let $f\colon \bbG \to \bbR$ be a Lipschitz map. Then $f$ is Pansu differentiable almost everywhere.
\end{theorem}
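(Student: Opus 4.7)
The plan is to adapt the proof of Rademacher's theorem to the Carnot setting, producing a candidate Pansu differential at almost every point via horizontal partial derivatives and then upgrading to full Pansu differentiability via a blow-up argument.

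For the candidate, I would observe that each horizontal vector field $X_i$ ($i \leq m$) has integral curves which are horizontal, and hence Lipschitz curves with respect to $d_c$. Restricting $f$ to any such integral curve yields a Lipschitz function of one real variable, and applying Fubini to the fibration of $\bbG \cong \bbR^n$ by these integral curves produces a set $E$ of full measure at whose points the horizontal derivatives $X_1 f, \ldots, X_m f$ all exist. For $x_0 \in E$ I would set
\[ L_{x_0}(\exp(t_1 X_1 + \cdots + t_n X_n)) := \sum_{i=1}^m t_i \, X_i f(x_0). \]
This is group linear, since the Baker--Campbell--Hausdorff formula affects only the coordinates corresponding to $V_2 \oplus \cdots \oplus V_s$, on which $L_{x_0}$ vanishes by construction, while dilations scale the first $m$ coordinates by $r$. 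Moreover, any $\bbR$-valued group linear $L$ must vanish on the higher strata: for $X \in V_i$ with $i\ge 2$, additivity gives $L(\exp(nX)) = n L(\exp X)$ for positive integers $n$, while homogeneity gives $L(\exp(n X)) = n^{1/i} L(\exp X)$ via $\delta_{n^{1/i}}\exp(X) = \exp(nX)$; the two identities force $L(\exp X) = 0$. So $L_{x_0}$ is the only possible Pansu differential at $x_0$.

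To show $L_{x_0}$ actually \emph{is} the Pansu differential at a.e.\ $x_0$, I would introduce the blow-ups $f_r(y) := r^{-1}(f(x_0 \delta_r y) - f(x_0))$. Left-invariance and $\delta_r$-homogeneity of $d_c$ make $\{f_r\}_{r>0}$ equi-Lipschitz with constant $\mathrm{Lip}(f)$, with $f_r(0) = 0$; since closed CC-balls are compact, Arzel\`a--Ascoli extracts from every $r_n \downarrow 0$ a subsequence along which $f_{r_n} \to g$ locally uniformly, for some Lipschitz $g$. Pansu differentiability at $x_0$ is equivalent to $g = L_{x_0}$ independently of the chosen subsequence. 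On the horizontal stratum $\exp(V_1)$ this agreement is forced for a.e.\ $x_0$ by the very definition of the horizontal derivatives, evaluated along integral curves of the $X_i$. To extend the equality from $\exp(V_1)$ to all of $\bbG$ I would use the Chow--Rashevskii theorem to express any $y$ as the endpoint of a horizontal polygonal path from $0$, combined with the group linearity of $L_{x_0}$ and the uniform Lipschitz bound on $f_r$ on each polygonal piece, to identify $g(y)$ with $L_{x_0}(y)$.

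The main obstacle is precisely this propagation: the blow-up limit $g$ and the candidate $L_{x_0}$ a priori agree only on horizontal directions, while they must be compared on all of $\bbG$, and the non-horizontal coordinates are reached only through iterated commutators of horizontal flows. Unlike in the Euclidean case, where agreement of two linear maps on a spanning set of directions is automatic, here one must control how errors accumulate along the horizontal polygonal approximation of a CC-geodesic, which in turn relies on a delicate use of the Baker--Campbell--Hausdorff formula together with the sharp scaling $d_c(\delta_r y) = r\, d_c(y)$ between strata of different homogeneity degrees. This is the step that genuinely departs from Rademacher's proof and absorbs the bulk of Pansu's original argument.
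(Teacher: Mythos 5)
The paper does not actually prove Theorem \ref{PansuThm}: it is quoted as a known result with references to \cite{Pan89} and \cite{Mag01}, so there is no in-paper argument to compare against. Your outline does follow the standard strategy of those references: horizontal partials a.e.\ via Fubini along integral curves, the correct candidate $L_{x_0}$ built from $\nabla_Hf(x_0)$ (consistent with \eqref{gradform}), the observation that any group linear map vanishes on $V_2\oplus\cdots\oplus V_s$, and a blow-up/Arzel\`a--Ascoli compactness argument. All of that is sound.

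However, the proposal has genuine gaps precisely where the theorem's content lies, and you flag but do not fill them. First, identifying the blow-up limit $g$ with $L_{x_0}$ on horizontal directions requires more than the pointwise existence of $X_if(x_0)$. The ``very definition of the horizontal derivatives'' only yields $g(\exp(tX_i))=tX_if(x_0)$ along the one-parameter subgroups through the origin; to propagate anywhere you need the affine property $g(y\exp(tX_i))=g(y)+tX_if(x_0)$ for all $y$, i.e.\ convergence of
\[
\frac{f\bigl(x_0\,\delta_r(y)\exp(rtX_i)\bigr)-f\bigl(x_0\,\delta_r(y)\bigr)}{r}\longrightarrow tX_if(x_0)
\]
with a basepoint $x_0\delta_r(y)$ that moves with $r$. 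This is obtained from the integral representation $f(x\exp(tX_i))-f(x)=\int_0^tX_if(x\exp(sX_i))\,ds$ at Lebesgue points of $X_if\in L^\infty$ (or an Egorov-type uniformization of the difference quotients), not from the mere a.e.\ existence of the derivative; a similar issue already affects your claim of agreement on all of $\exp(V_1)$, since derivatives in directions $\sum_it_iX_i$ other than the basis directions, and the linearity of $X\mapsto Xf(x_0)$, are not automatic. Second, the passage from horizontal directions to general $y\in\bbG$ --- writing $y$ as a controlled product of horizontal exponentials and showing the errors do not accumulate as $r\to0$ --- is exactly the non-Rademacher part of Pansu's proof, and you describe it as ``absorbing the bulk of Pansu's original argument'' without carrying it out. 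As written, the proposal is a correct roadmap to the known proof rather than a proof; for the purposes of this paper the appropriate treatment is the one the authors adopt, namely citing \cite{Pan89, Mag01}.
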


We define horizontal directional derivatives of a Lipschitz function by composition with horizontal lines.

\begin{definition}\label{directionalderivative}
Suppose $f\colon \bbG\to \bbR$, $x\in \bbG$ and $X\in V_{1}$. We say that $f$ is \emph{differentiable at $x$ in direction $X$} if the following limit exists:
\[Xf(x)=\lim_{t\to 0}\frac{f(x\exp(tX))-f(x)}{t}.\]
\end{definition}

If $x\in \bbG$ and $X\in V_{1}$ is horizontal, the map $t\mapsto x\exp(tX)$ is Lipschitz. Hence if $f\colon \bbG\to \bbR$ is Lipschitz, the composition $t\mapsto f(x\exp(tX))$ is a Lipschitz mapping from $\bbR$ to $\bbR$ so is differentiable almost everywhere. Thus Lipschitz functions have directional derivatives along each horizontal line, at almost every point and in the direction of the line. It follows from the definition of $d_{c}$ that if $f\colon \bbG \to \bbR$ is Lipschitz then $|X_{i}f(x)|\leq \mathrm{Lip}(f)$ whenever $1\leq i\leq m$ and $X_{i}f(x)$ exists. Here we recall that $X_{1},\ldots, X_{m}$ is our fixed basis of $V_1$, which we used to define horizontal length and CC distance.

\begin{definition}
Suppose $f\colon \bbG\to \bbR$, $x\in \bbG$ and $X_{i}f(x)$ exists for every $1\leq i\leq m$.  The \emph{horizontal gradient} of $f$ at $x$ is defined by
\[\nabla_{H}f(x)=(X_1f(x), \ldots ,X_{m}f(x)) \in \bbR^{m}.\]
\end{definition}

As observed in \cite[Remark 3.3]{MS01}, if $f$ is Pansu differentiable at $x_0$ then $X_{i}f(x)$ exists for every $1\leq i\leq m$ and
\begin{equation}\label{gradform}df(x_0)(h)=\langle \nabla_{H}f(x_0), p(h) \rangle.\end{equation}
Here $\langle \cdot, \cdot \rangle$ denotes the Euclidean inner product on $\bbR^{m}$.

\subsection{Porous Sets}

When a metric space is clear from the context, we denote the open ball of centre $x$ and radius $r>0$ by $B(x,r)$.

\begin{definition}\label{def_porous}
Let $(M, d)$ be a metric space and $E\subset M$.

Given $\lambda\in (0,1)$ and $a\in E$, we say $E$ is \emph{$\lambda$-porous at $a$} if there is a sequence of points $x_{n}\in M$ with $x_{n}\to a$ such that
\[B(x_n,\lambda d(x_n,a))\cap E=\varnothing \qquad \mbox{ for every }n\in \mathbb{N}.\]
The set $E$ is \emph{porous at $a$} if it is $\lambda$-porous at $a$ for some $\lambda\in (0,1)$.

The set $E$ is \emph{porous} if there is $\lambda \in (0,1)$ such that $E$ is $\lambda$-porous at $a$ for every point $a\in E$, with $\lambda$ independent of the point $a$. 

A set is \emph{$\sigma$-porous} if it is a countable union of porous sets. 
\end{definition}

Porous sets in metric spaces are nowhere dense and $\sigma$-porous sets in metric spaces are of first category, which means they can be written as a countable union of nowhere dense sets. However, in any non-empty topologically complete metric space without isolated points, there exists a closed nowhere dense set which is not $\sigma$-porous \cite[Theorem 2.4]{Zaj05}. 

Unless otherwise stated, porosity in a Carnot group will mean porosity with respect to the CC distance (or a Lipschitz equivalent distance).

\section{Geometry of Porous Sets in Carnot Groups}\label{porousproperties}

In this section we compare CC porosity with Euclidean porosity and other notions of smallness of sets.

\subsection{Measure and Porosity in Carnot Groups}

We begin by observing that porous sets in Carnot groups have measure zero. The simple proof does not depend on the structure of Carnot groups; it is known that porous sets have measure zero in any metric space equipped with a doubling measure.

\begin{proposition}\label{porousnull}
Porous sets in Carnot groups have measure zero.
\end{proposition}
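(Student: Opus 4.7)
The approach is the classical Lebesgue density point argument, adapted to the doubling metric measure space $(\bbG, d_c, \mathcal{L}^n)$. First I would verify that $\mathcal{L}^n$ is doubling with respect to $d_c$: left-invariance of both $d_c$ and $\mathcal{L}^n$, combined with the fact that $\delta_r$ has Jacobian $r^Q$ where $Q = \sum_{i=1}^{s} i \dim(V_i)$ is the homogeneous dimension of $\bbG$, gives $\mathcal{L}^n(B(x,r)) = r^Q \mathcal{L}^n(B(0,1))$ for every $x \in \bbG$ and $r > 0$, and in particular $\mathcal{L}^n(B(x,2r)) = 2^Q \mathcal{L}^n(B(x,r))$.

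Since the Lebesgue differentiation theorem requires a measurable set, I would next reduce to that case by passing to the closure. A short argument shows that if $E$ is $\lambda$-porous then $\overline{E}$ is $\lambda'$-porous at every point of $\overline{E}$ for each $\lambda' \in (0, \lambda)$: the ball $B(x, \lambda' s)$ lies at positive distance from the complement of the concentric ball $B(x, \lambda s)$, so if the latter misses $E$ then the former misses $\overline{E}$; applying this at points of $\overline{E}$ approximated by points of $E$ reduces the proposition to a closed porous set.

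Now assume $E$ is closed and $\lambda$-porous, and suppose for contradiction that $\mathcal{L}^n(E) > 0$. By the Lebesgue differentiation theorem in a doubling metric measure space, almost every $a \in E$ satisfies $\mathcal{L}^n(E \cap B(a,r))/\mathcal{L}^n(B(a,r)) \to 1$ as $r \to 0$. Fix such an $a$, choose $x_n \to a$ with $B(x_n, \lambda d_c(x_n, a)) \cap E = \varnothing$, and set $r_n = (1+\lambda) d_c(x_n, a)$. The triangle inequality gives $B(x_n, \lambda d_c(x_n, a)) \subset B(a, r_n)$, and the explicit scaling of CC balls then yields
$$\frac{\mathcal{L}^n(E \cap B(a,r_n))}{\mathcal{L}^n(B(a,r_n))} \leq 1 - \left(\frac{\lambda}{1+\lambda}\right)^Q < 1,$$
contradicting density $1$ at $a$.

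The only Carnot-specific ingredient is the explicit scaling of CC balls under dilations; once that is in hand, the argument runs in any doubling metric measure space. I do not anticipate any serious obstacle: the measurability reduction is the only subtlety, and it is routine.
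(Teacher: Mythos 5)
Your density--point argument and the explicit computation $\mathcal{L}^n(B(x,r))=r^Q\mathcal{L}^n(B(0,1))$ are correct and match the core of the paper's proof. The gap is in the measurability reduction. The claim that the closure of a $\lambda$-porous set is $\lambda'$-porous at every point of the closure is false. Your argument does show that $\overline{E}$ is $\lambda'$-porous at points of $E$ (the shrunken hole $B(x,\lambda's)$ avoids $\overline{E}$ whenever $B(x,\lambda s)$ avoids $E$), but it breaks down at points $b\in\overline{E}\setminus E$: the holes witnessing porosity at a nearby $a\in E$ have radius proportional to $d(x,a)$, and $d(x,a)$ may be arbitrarily small compared with $d(x,b)$, so those holes are useless for porosity at $b$. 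Concretely, already in the Carnot group $\bbR$: let $C\subset[0,1]$ be a fat Cantor set with $\mathcal{L}^1(C)>0$ and let $E$ be the countable set of endpoints of the removed intervals. Each $a\in E$ is approached from inside a removed interval, whose interior contains no point of $C\supset E$, so $E$ is $(1/3)$-porous; yet $\overline{E}=C$ has positive measure, so $\overline{E}$ is not porous and the proposition cannot be reduced to it.

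The correct reduction, which the paper carries out, is to enclose a porous set $P$ (porosity constant $c$) in a porous set of class $G_{\delta}$: set $H=(\bbG\setminus P)^{\circ}$ and $Q=(\bbG\setminus H)\cap\bigcap_{n}Q_{n}$, where $Q_{n}$ is the open set of points $x$ admitting $z\in H$ with $d_{c}(z,x)<1/n$ and $\overline{B(z,(c/2)d_{c}(z,x))}\subset H$. One checks that $P\subset Q$ and that $Q$ is $(c/2)$-porous, so it suffices to treat measurable porous sets. With that replacement your proof goes through; the doubling computation via dilations and the contradiction with density one are fine and are essentially the paper's argument.
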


\begin{proof}
Haar measure $\mathcal{L}^{n}$ on $\bbG$ equipped with the CC distance is doubling, so the Lebesgue differentiation theorem holds \cite[Theorem 1.8]{Hei01}. Given $P\subset \bbG$ Lebesgue measurable, applying the Lebesgue differentiation theorem to the characteristic function of $P$ implies that the density
\[\lim_{r\downarrow 0} \frac{\mathcal{L}^{n}(P\cap B(x,r))}{\mathcal{L}^{n}(B(x,r))}\]
is equal to $1$ for almost every $x\in P$ and equal to $0$ for almost every $x\notin P$. 

Recall that a set is of class $G_{\delta}$ if it can be expressed as a countable intersection of open sets. We claim that to prove the theorem it suffices to show that every Lebesgue measurable porous set has Lebesgue measure zero. This follows from the fact that every porous set is contained in a porous set of class $G_{\delta}$. To see this fact, suppose $P$ is porous with porosity constant $c$. Let $H=(\bbG \setminus P)^{\circ}$ and define 
\[Q=(\bbG\setminus H)\cap\bigcap_{n=1}^{\infty}Q_{n},\]
where
\[Q_{n}= \{x \in \bbG: \exists \, z \in H, \, d_{c}(z,x)<\frac{1}{n}, \, \overline{B(z,(c/2)d(z,x))}\subset H\}.\]
The sets $Q_{n}$ are open while $\bbG\setminus H$ is closed (hence $G_{\delta}$) so $Q$ is $G_{\delta}$. It is easy to check that $Q$ is a porous set containing $P$.

Now suppose $P$ is a Lebesgue measurable porous set with $\mathcal{L}^{n}(P)>0$. By the Lebesgue differentiation theorem, there exists $x\in P$ such that
\begin{equation}\label{pnpf}
\lim_{r\downarrow 0} \frac{\mathcal{L}^{n}(P\cap B(x,r))}{\mathcal{L}^{n}(B(x,r))}=1.
\end{equation}
Since $P$ is porous, there are $0<\lambda<1$ and $x_{n}\to x$ such that:
\begin{equation}\label{emptyint}
B(x_{n},\lambda d(x_{n},x))\cap P=\varnothing \mbox{ for every }n\in \mathbb{N}.
\end{equation}
Using \eqref{pnpf} with radii $2d(x_{n},x)\to 0$ and observing
\[B(x_{n},\lambda d(x_{n},x))\subset B(x,2d(x_{n},x)),\]
equation \eqref{emptyint} implies:
\[ \lim_{n\to \infty} \frac{\mathcal{L}^{n}(B(x_{n},\lambda d(x_{n},x)))}{\mathcal{L}^{n}(B(x,2d(x_{n},x)))}=0.\]
This contradicts the fact that $\mathcal{L}^{n}$ is doubling. Hence $\mathcal{L}^{n}(P)=0$, which proves the proposition.
\end{proof}

Proposition \ref{porousnull} implies that the collection of $\sigma$-porous sets is contained in the collection of first category measure zero sets. In $\bbR^{n}$ there exists a closed nowhere dense set of Lebesgue measure zero which is not $\sigma$-porous \cite[Theorem 2.3]{Zaj05}. We now prove a similar statement in Carnot groups.

\begin{theorem}\label{poroussmall}
There exists a closed nowhere dense set in $\bbG$ which has measure zero but is not $\sigma$-porous.
\end{theorem}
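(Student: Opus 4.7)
The plan is to adapt the Cantor-like construction of \cite[Theorem 2.3]{Zaj05} from $\bbR^{n}$ to the Carnot setting, exploiting that $(\bbG, d_{c})$ is a complete, doubling, geodesic metric space without isolated points. Since Proposition \ref{porousnull} already gives porous $\Rightarrow$ measure zero, the task is to produce a closed nowhere dense Lebesgue null set for which no countable $\sigma$-porous decomposition exists.

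Inductively I would build a decreasing sequence $F_0 \supset F_1 \supset \cdots$ of compact sets, where $F_0 = \overline{B(0,1)}$ and each $F_k$ is a finite disjoint union of closed CC balls $\overline{B(x_{k,i}, r_k)}$ with $r_k \downarrow 0$. Inside every parent ball I would place disjoint closed child CC balls of radius $r_{k+1}$ whose centers form an $\varepsilon_k r_k$-net of the parent, with $\varepsilon_k \downarrow 0$ and $r_{k+1} \ll \varepsilon_k r_k$. The limit $F = \bigcap_k F_k$ is closed and nowhere dense because component diameters vanish. Writing $Q$ for the homogeneous dimension, so $\mathcal{L}^{n}(B(x,r)) = c_Q r^Q$, the doubling of $d_{c}$ bounds the number of children per parent by $C \varepsilon_k^{-Q}$, yielding
\[
\mathcal{L}^{n}(F_{k+1}) \leq C \varepsilon_k^{-Q} (r_{k+1}/r_k)^{Q}\, \mathcal{L}^{n}(F_k),
\]
so choosing $r_{k+1}/r_k$ small enough at every stage forces $\mathcal{L}^{n}(F) = 0$.

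The core step is showing $F$ is not $\sigma$-porous. Assume for contradiction $F = \bigcup_j A_j$ with each $A_j$ being $\lambda_j$-porous. The ``components'' $F \cap \overline{B(x_{k,i}, r_k)}$ form a clopen basis for the topology of the compact space $(F, d_{c})$, so Baire's theorem supplies $j, k_0, i_0$ with $F \cap \overline{B(x_{k_0, i_0}, r_{k_0})} \subseteq \overline{A_j}$. Using the standard fact that the closure of a $\lambda_j$-porous set is $(\lambda_j/2)$-porous, this subset of $\overline{A_j}$ is $(\lambda_j/2)$-porous at each of its own points. But for $k \geq k_0$ large enough that $\varepsilon_k < \lambda_j/4$, the construction places points of $F$ within $\varepsilon_k r_k$-distance of every $z \in \overline{B(x_{k_0, i_0}, r_{k_0})}$, ruling out any $(\lambda_j/2)$-porous hole at scale $r_k$ at any point of $F \cap \overline{B(x_{k_0, i_0}, r_{k_0})}$.

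The main obstacle is calibrating $\varepsilon_k$ and $r_{k+1}/r_k$ jointly: $\varepsilon_k$ must decrease to zero, yet slowly enough that the density argument eventually beats every $\lambda_j$, while $r_{k+1}/r_k$ must shrink fast enough---given the inflated child count $\sim \varepsilon_k^{-Q}$---to produce measure zero. A diagonal choice such as $\varepsilon_k = 1/k$ with $r_{k+1}/r_k \leq \varepsilon_k^{Q+2}$ should suffice. A secondary technical point is the construction of $\varepsilon_k r_k$-nets of pairwise disjoint CC sub-balls inside each parent; this follows from doubling and the existence of maximal separated sets in $(\bbG, d_{c})$, both standard features of Carnot groups.
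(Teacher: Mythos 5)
Your construction does not produce a non-$\sigma$-porous set: the set $F$ you build is in fact $\lambda$-porous at \emph{every} one of its points, for every $\lambda<1$. The culprit is the condition $r_{k+1}\ll\varepsilon_{k}r_{k}$. Fix $a\in F$ and a level $k$, and let $c$ be the centre of the level-$(k+1)$ ball containing $a$. Since the level-$(k+1)$ centres inside a common parent are pairwise $\varepsilon_{k}r_{k}$-separated (this separation is exactly what your doubling bound $C\varepsilon_{k}^{-Q}$ on the number of children uses), for any intermediate scale $s$ with $r_{k+1}\ll s\ll\varepsilon_{k}r_{k}$ the ball $B(a,2s)$ meets $F$ only inside $\overline{B}(c,r_{k+1})$. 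Taking $x$ with $d_{c}(x,a)=s$ (a geodesic exists, and one can point it towards the parent's centre so as to stay inside the parent) gives $B(x,s-2r_{k+1})\cap F=\varnothing$, and with $s=\sqrt{r_{k+1}\,\varepsilon_{k}r_{k}}$ the ratio $(s-2r_{k+1})/s$ tends to $1$. As $k\to\infty$ these holes occur at scales tending to $0$, so $F$ is porous --- the opposite of what is required. This is not a calibration issue that a cleverer choice of $\varepsilon_{k}$ and $r_{k+1}/r_{k}$ can fix: Definition \ref{def_porous} only demands holes along \emph{some} sequence of scales, and any Cantor-type scheme with widely separated consecutive scales supplies them. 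Producing a closed non-$\sigma$-porous set from scratch genuinely requires the interlocking-scale machinery of Foran systems (\cite[Lemma 4.3]{Zaj87}), and combining that with measure zero is precisely the content of \cite[Theorem 2.3]{Zaj05}; it is not something one gets from a net-of-balls construction.

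Your non-$\sigma$-porosity argument also has an independent gap: for the upper porosity of Definition \ref{def_porous}, the closure of a $\lambda$-porous set need not be porous at all. A set all of whose points are isolated is $\lambda$-porous for every $\lambda<1$ (holes at sufficiently small scales need only avoid the point itself), yet such a set can be made $\delta_{n}$-dense in $[1/(n+1),1/n]$ and in $[-1/n,-1/(n+1)]$ with $\delta_{n}\to 0$ extremely fast, so that its closure gains the limit point $0$ at which no porosity holds. The correct way to run the Baire step is to pick $a\in A_{j}\cap B_{0}$ itself and note that an open hole missing $A_{j}$ also misses $\overline{A_{j}}\supseteq F\cap B_{0}$; but the contradiction then requires $F$ to admit \emph{no} relative holes near any of its points at any small scale, which, as above, your $F$ badly fails. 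The paper avoids the entire construction by a reduction: it takes the known one-dimensional example $N\subset\bbR$ of \cite[Theorem 2.3]{Zaj05} and shows that $N\times\bbR^{n-1}$ works in $\bbG$. The only Carnot-specific input is Lemma \ref{puc}, which shows the CC distance from $(x,y)$ to the fibre $\{t\}\times\bbR^{n-1}$ equals $|t-x|$, so that CC-porosity of a cylinder $F\times\bbR^{n-1}$ forces Euclidean porosity of $F$ in $\bbR$; non-$\sigma$-porosity then transfers by the argument of \cite[Lemma 3.4]{Zaj76}. If you want a self-contained proof, you would need to reproduce Zaj\'{\i}\v{c}ek's construction in the doubling geodesic setting, which is a substantially harder task than the one you have sketched.
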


Before proving Theorem \ref{poroussmall} we prove the following lemma.

\begin{lemma}\label{puc}
Let $(x,y)\in \mathbb{R}\times \bbR^{n-1}=\bbG$. Then for every $t\in \bbR$, there exists $\tau\in \mathbb{R}^{n-1}$ such that
\[d_{c}((t,\tau),(x,y))=|t-x|.\]
\end{lemma}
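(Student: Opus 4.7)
The plan is to produce the required point $(t,\tau)$ as the endpoint of the horizontal curve obtained by flowing from $(x,y)$ along the direction $X_1$ for ``time'' $t-x$, and then to prove matching upper and lower bounds on $d_c$.

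First, set
\[(t,\tau) := (x,y)\cdot \exp((t-x)X_1).\]
The verification that the first coordinate of this product really equals $t$ is a short Baker--Campbell--Hausdorff computation: the logarithm of $(x,y)\exp((t-x)X_1)$ equals $\sum x_i X_i + (t-x)X_1$ plus iterated brackets, and every such bracket lies in $V_2\oplus\cdots\oplus V_s$ because $[V_1,V_1]\subseteq V_2$. Hence the first $m$ coordinates of the product are $(x_1+(t-x), x_2,\ldots, x_m) = (t, x_2,\ldots, x_m)$. Define $\tau$ to be the resulting last $n-1$ coordinates. The curve $\gamma(s)=(x,y)\exp(sX_1)$ for $s\in[0,t-x]$ (or the reverse orientation if $t<x$) is horizontal with $u_1\equiv 1$ and $u_2=\cdots=u_m\equiv 0$, so $L(\gamma)=|t-x|$, and therefore $d_c((t,\tau),(x,y))\leq |t-x|$.

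For the reverse inequality, the key ingredient is the standard fact that in exponential coordinates of the first kind each horizontal left-invariant vector field has the form
\[X_j = \frac{\partial}{\partial x_j} + \sum_{k>m} q_{jk}(x)\frac{\partial}{\partial x_k},\qquad 1\leq j\leq m,\]
where each $q_{jk}$ is a polynomial in coordinates of strata strictly below that of $X_k$; see, e.g., \cite{BLU07}. In particular, for any horizontal curve $\eta\colon[a,b]\to\bbG$ from $(x,y)$ to $(t,\tau)$ with controls $u=(u_1,\ldots,u_m)$, the first coordinate satisfies $\eta_1'(s)=u_1(s)$. Integrating and applying $|u_1|\leq |u|$ gives
\[|t-x|=\left|\int_a^b u_1(s)\dd s\right|\leq \int_a^b |u(s)|\dd s = L(\eta),\]
and taking the infimum over all such $\eta$ yields $|t-x|\leq d_c((t,\tau),(x,y))$, completing the proof.

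The main obstacle is the structural identity $\eta_1'(s)=u_1(s)$; once it is granted the proof reduces to a one-line length estimate together with the BCH bookkeeping in the construction of $\tau$. Morally, this identity records the fact that in exponential coordinates of the first kind the first layer ``leaks'' only into higher strata and never horizontally into itself, so the first coordinate of any horizontal curve is simply the integral of its first control.
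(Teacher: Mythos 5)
Your proof is correct and follows essentially the same route as the paper: your point $(x,y)\cdot\exp((t-x)X_1)$ is exactly the point $(t,\tau)$ that the paper constructs inductively from the group-law polynomials $\mathcal{R}_i$, namely the unique point with $(t,\tau)^{-1}\cdot(x,y)=(x-t,0,\ldots,0)$. The only difference is presentational: you spell out the final step $d_c(\exp(sX_1),0)=|s|$ (upper bound from the horizontal segment, lower bound from the identity $\eta_1'=u_1$ for horizontal curves), which the paper asserts without proof but uses implicitly both here and in the proof of Theorem \ref{poroussmall}.
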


\begin{proof}[Proof of Lemma \ref{puc}]
Recall that we are using exponential coordinates of the first kind. By \cite[Proposition 2.2.22]{BLU07} we can write the group law of $\bbG$ as
\begin{align}\label{grouplaw}
(p\cdot q)_i=p_i+q_i+\mathcal{R}_i(p,q)\quad  \mbox{ for every }p,q\in\bbG \mbox{ and } i=1,\ldots, n,
\end{align}
where $\mathcal{R}_i(p,q)$ is a polynomial function depending only on $p_k$ and $q_k$ with $k<i$ and $\mathcal{R}_i\equiv 0$ for all $i\leq m$. Let $y=(y_2,\ldots, y_n)$. We will define $\tau=(\tau_2,\ldots, \tau_n)$ by induction. Let $\tau_2:=y_2$ and for $3\leq i\leq n$ define:
\[
\tau_i:=y_i+\mathcal{R}_{i}((t, \tau)^{-1},(x,y)).
\]
Such a definition is justified because, since $p^{-1}=-p$ for $p\in \bbG$, the formula for $\tau_i$ depends only on $\tau_j$ for $j<i$.
Using \eqref{grouplaw}, it is easy to see that $(t,\tau)^{-1}\cdot (x,y)=(x-t,0,\ldots, 0)$. The conclusion follows by noticing that
\[d_c((t,\tau), (x,y))=d_c(x-t,0,\ldots, 0)=|x-t|.\]
\end{proof}

\begin{proof}[Proof of Theorem \ref{poroussmall}]
Using \cite[Theorem 2.3]{Zaj05}, choose a closed nowhere dense set $N\subset \bbR$ which has Lebesgue measure zero but is not $\sigma$-porous. Then
\[N\times \bbR^{n-1}\subset \bbR^{n}=\bbG\]
is closed and nowhere dense in $\bbG$ (since the distances $d_{c}$ and $d_{e}$ are topologically equivalent) and clearly has Lebesgue measure zero. It remains to show that $N\times \bbR^{n-1}$ is not $\sigma$-porous with respect to $d_{c}$. 
Arguing as in \cite[Lemma 3.4]{Zaj76}, it suffices to check that if $F\subset \bbR$ and $F\times \bbR^{n-1}$ is porous in $\bbG$ then $F$ is porous in $\bbR$. Fix $a\in F$. Since $F\times \bbR^{n-1}$ is porous in $\bbG$ at $(a,0)$, there exists a constant $0<c<1$ (independent of $a$) and sequences $x^{k}\in \bbR$, $y^{k}\in \bbR^{n-1}$, $r^{k}>0$ such that
\begin{enumerate}
	\item $B((x^{k},y^{k}),r^{k})\cap (F\times \bbR^{n-1})=\varnothing$ for all $k\in\mathbb{N}$.
	\item $r^{k}> cd_{c}((a,0),(x^{k},y^{k}))$ for all $k\in\mathbb{N}$.
	\item $x^{k}\to a$ and $y^{k}\to 0$.
\end{enumerate}
The definition of $d_{c}$ implies $d_{c}((a,0),(x^{k},y^{k}))\geq |x^{k}-a|$. Hence (2) implies $r^{k}>c|x^{k}-a|$. If $t\in \bbR$ and $|t-x^{k}|<r^{k}$ then by Lemma \ref{puc}, there exists $\tau\in \bbR^{n-1}$ such that $(t,\tau)\in B((x^{k},y^{k}),r^{k})$. Hence (1) implies $t\notin F$, so $(x^{k}-r^{k},x^{k}+r^{k})\cap F=\varnothing$. Combining these observations with (3) shows that $F$ is porous and concludes the proof.
\end{proof}

\subsection{Comparison of Euclidean and CC Porosity}

\begin{remark}\label{sameporoussets}
If $d_{1}$ and $d_{2}$ are Lipschitz equivalent distances then a set is porous with respect to $d_{1}$ if and only if it is porous with respect to $d_{2}$. Notions of porosity may be the same even if the two distances are not Lipschitz equivalent: it is easy to show that if $(M,d)$ is a metric space then $d$ and any snowflaked metric $d^{\varepsilon}$, $0<\varepsilon<1$, give the same porous sets.
\end{remark}

Recall that $\bbG$ is identified with $\bbR^{n}$ in coordinates, so on $\bbG$ we can consider both the CC distance and the Euclidean distance. We next show that if $\bbG$ is the first Heisenberg group then these two distances give incomparable families of porous sets.

\begin{definition}\label{Heisenberg}
The first Heisenberg group $\mathbb{H}^1$ is $\mathbb{R}^{3}$ equipped with the non-commutative group law:
\begin{equation}\label{Hgplaw}
(x,y,t) (x',y',t') = (x+x', \ y+y', \ t+t'-2( xy' - yx')).
\end{equation}
The Koranyi distance on $\mathbb{H}^{1}$ is defined by:
\begin{equation}\label{Koranyi}
d_{k}(a,b)=\|a^{-1}b\|_{k}, \mbox{ where } \|(x,y,t)\|_{k}=((x^{2}+y^{2})^{2}+t^{2})^{1/4}.
\end{equation}
For brevity we write $d_{k}(a)$ instead of $d_{k}(a,0)$.
\end{definition}

The Koranyi distance is Lipschitz equivalent to the CC distance on $\mathbb{H}^{1}$, so a set is porous with respect to the Koranyi distance if and only if it is porous with respect to the CC distance. When necessary, we use the notation $B_{e}(a,r)$ and $B_{k}(a,r)$ to distinguish between balls in $\mathbb{R}^{3}$ with respect to the Euclidean or Koranyi distance. The following estimate is clear from \eqref{Koranyi}:
\begin{equation}\label{simpleest}
d_{k}(x,y,t)\geq \max \left( |(x,y)|,\sqrt{|t|} \right).
\end{equation}

Let $C$ be the middle third Cantor set. Note $C$ is $(1/3)$-porous as a subset of $[0,1]$ with the Euclidean distance. If $\alpha, \beta \in \bbR$ and $S\subset \bbR$, then we define $\alpha +\beta S=\{\alpha+\beta s \colon s\in S\}$. If $S\subset \bbR$ is porous then $\alpha +\beta S$ is also porous, with the same porosity constant as $S$.

\begin{lemma}\label{intobs}
For $n, k\in \bbN$ with $0\leq k\leq 2^{n-1}$, let $A_{n,k}$ be the translated dilated Cantor set:
\begin{equation}\label{Ank}
A_{n,k}:=(2^{-n} + k2^{-2n}) + 2^{-2n}C.
\end{equation}
Then:
\begin{enumerate}
\item The intersection $A_{n,k}\cap A_{m,l}$ is at most one point if $(n,k)\neq (m,l)$.
\item Each set $A_{n,k}$ is $(1/3)$-porous as a subset of the interval
\[[2^{-n} + k2^{-2n}, 2^{-n} + (k+1)2^{-2n}].\]
\item For every $0<t<1$:
\[[t, t+4t^{2}]\cap \bigcup_{n=1}^{\infty} \bigcup_{k=0}^{2^n - 1}A_{n,k}\neq \varnothing.\]
\end{enumerate}
\end{lemma}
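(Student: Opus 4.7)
The plan is to verify the three claims by direct computation, using the observation that $A_{n,k}$ is the image of the middle third Cantor set $C$ under the increasing affine bijection $\varphi_{n,k}\colon [0,1] \to I_{n,k}$, $\varphi_{n,k}(x)=(2^{-n}+k2^{-2n})+2^{-2n}x$, where $I_{n,k} := [2^{-n}+k 2^{-2n},\, 2^{-n}+(k+1)2^{-2n}]$ is the interval appearing in the statement. Claim (2) then follows immediately: affine maps on $\bbR$ preserve porosity constants, so the $(1/3)$-porosity of $C$ in $[0,1]$ transfers to $A_{n,k}=\varphi_{n,k}(C)$ in $I_{n,k}$.

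For (1), I would verify a nested dyadic tiling structure for the intervals $I_{n,k}$. For each fixed $n\geq 1$, the $2^n$ intervals $I_{n,0},I_{n,1},\ldots,I_{n,2^n-1}$ have length $2^{-2n}$ and together tile the dyadic shell $[2^{-n},2^{-n+1}]$ exactly, meeting only at endpoints (note $I_{n,2^n-1}$ has right endpoint $2^{-n}+2^n\cdot 2^{-2n}=2^{-n+1}$). Different shells $[2^{-n},2^{-n+1}]$ and $[2^{-m},2^{-m+1}]$ with $n\neq m$ are pairwise disjoint except for a single common boundary point when $|n-m|=1$. Since $A_{n,k}\subset I_{n,k}$, any two distinct $A_{n,k}$ and $A_{m,l}$ lie in closed intervals intersecting in at most one point, which gives (1).

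For (3), the strategy is: given $t\in(0,1)$, produce a witness point in $[t,t+4t^2]\cap\bigcup_{n,k}A_{n,k}$ by taking the right endpoint of the $I_{n,k}$ containing $t$. Choose $n\geq 1$ with $2^{-n}\leq t<2^{-n+1}$ and $k\in\{0,\ldots,2^n-1\}$ with $t\in I_{n,k}$. Set $p:=2^{-n}+(k+1)2^{-2n}$. If $k<2^n-1$, then $p$ is the left endpoint of $I_{n,k+1}$, so $p=\varphi_{n,k+1}(0)\in A_{n,k+1}$ because $0\in C$; if $k=2^n-1$, then $p=2^{-n+1}$ is the left endpoint of $I_{n-1,0}$ for $n\geq 2$, and for $n=1$ one has $p=1\in A_{1,1}$ since $1\in C$. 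In each case $p\in\bigcup_{n',k'}A_{n',k'}$, and the estimate
\[
0 \leq p-t \leq |I_{n,k}| = 2^{-2n} \leq t^2 \leq 4t^2,
\]
using $t\geq 2^{-n}$, places $p$ in $[t,t+4t^2]$.

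No step is genuinely delicate; the computations are routine and the only bookkeeping occurs in (3), where one must track which $A_{n',k'}$ picks up the right endpoint of $I_{n,k}$ at level boundaries. The inequality $2^{-2n}\leq 4t^2$, which is the quantitative heart of (3), is a direct consequence of the choice $t\geq 2^{-n}$ and is what motivates the definition of the intervals as $2^{-n}+k2^{-2n}$-many copies of a $2^{-2n}$-scaled Cantor set.
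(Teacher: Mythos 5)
Your proposal is correct and follows essentially the same route as the paper, which dismisses (1) and (2) as clear from the definition and proves (3) by locating $t$ in the dyadic shell $[2^{-n},2^{-(n-1)}]$ and using that the tiles have length $2^{-2n}\leq t^{2}$. Your version of (3) is just a more explicit (and slightly sharper, needing only $t^{2}$ rather than $4t^{2}$) rendering of the same idea; note also that the endpoint bookkeeping at level boundaries is unnecessary, since the right endpoint of $I_{n,k}$ is already $\varphi_{n,k}(1)\in A_{n,k}$ because $1\in C$.
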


\begin{proof}
The first two assertions are clear from the definition of $A_{n,k}$. To prove the third, let $t \in (0,1)$ then choose $n\in \bbN$ such that $2^{-n}\leq t<2^{-(n-1)}$. Using \eqref{Ank}, we see that the interval $[t,t+2^{-2(n-1)}]$ intersects $A_{n,k}$ or $A_{n+1,k}$ for some $k$. Since
\[2^{-2(n-1)}=4\cdot 2^{-2n}\leq 4t^2,\]
we deduce that $[t,t+4t^{2}]$ intersects $A_{n,k}$ or $A_{n+1,k}$. This proves the lemma.
\end{proof}

\begin{proposition}\label{enotcc}
Define the cone $\Lambda$ by
\[\Lambda:=\{(x,y,t)\in \mathbb{H}^1\ \colon |t| \leq |(x,y)|\}.\]
Define
\[P_{e}:=\Lambda \cap \Big(\{0\}\cup \bigcup_{n=1}^{\infty} \bigcup_{k=0}^{2^n - 1} \{(x,y,t)\in\mathbb{H}^1 \colon |(x,y)|\in A_{n,k}  \}\Big). \]

The set $P_{e}$ is porous with respect to the Euclidean distance, but not porous at the point $0$ with respect to the CC distance.
\end{proposition}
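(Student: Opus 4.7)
My plan is to prove the two halves of the proposition by direct construction, using the Cantor structure of $F := \bigcup_{n,k} A_{n,k}$ from Lemma~\ref{intobs}.

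For the Euclidean porosity: at $0$, since $P_e \subset \Lambda$, the balls $B_e((0,0,1/n), 1/(2n))$ avoid $\Lambda$ entirely (every interior point satisfies $|t| > 1/(2n) > |(x,y)|$), so $P_e$ is $(1/2)$-porous at $0$. At a point $p = (x,y,t) \in P_e \setminus \{0\}$, let $s := |(x,y)| \in A_{n,k}$. Because gaps in the Cantor set $A_{n,k}$ lie strictly interior to its interval $I_{n,k} := [2^{-n}+k2^{-2n},\,2^{-n}+(k+1)2^{-2n}]$, and by Lemma~\ref{intobs}(1) distinct $I_{m,l}$ overlap in at most one point, these gaps in fact avoid all of $F$. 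Scaling $p$ radially to $p'_j := ((s_j/s)x, (s_j/s)y, t)$ for a porosity sequence $s_j \to s$ in $A_{n,k}$, I observe $d_e(p, p'_j) = |s_j - s|$, while every $q \in B_e(p'_j, |s_j - s|/3)$ satisfies $|(q_x,q_y)| \in (s_j - |s_j-s|/3,\, s_j + |s_j-s|/3)$, a gap of $F$; hence $q \notin P_e$. This gives Euclidean $(1/3)$-porosity at $p$.

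For the CC non-porosity at $0$, I show that for every $\alpha \in (0,1)$ and every $q = (x_q, y_q, t_q)$ with $r := d_k(q)$ sufficiently small (explicitly $r < \alpha/5$), the ball $B_k(q, \alpha r)$ meets $P_e$. Setting $s_q := |(x_q, y_q)|$ and $u := \max(s_q, |t_q|)$, I use Lemma~\ref{intobs}(3) to find $s' \in F$ with $u \leq s' \leq u + 4u^2$, and define
\[
p := \begin{cases}((s'/s_q) x_q,\, (s'/s_q) y_q,\, t_q) & \text{if } s_q > 0, \\ (s', 0, t_q) & \text{if } s_q = 0. \end{cases}
\]
Then $|(p_x, p_y)| = s' \in F$ and $|p_t| = |t_q| \leq u \leq s'$, so $p \in P_e$. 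A direct computation with~\eqref{Hgplaw} shows the twist $x_q p_y - y_q p_x$ vanishes, since $(p_x, p_y)$ is parallel to $(x_q, y_q)$, giving $q^{-1} p = ((s'/s_q - 1) x_q,\, (s'/s_q - 1) y_q,\, 0)$ in the first case and $q^{-1} p = (s', 0, 0)$ in the second. In either case $d_k(q, p) = s' - s_q$. Using $s_q \leq r$ and $|t_q| \leq r^2$ (hence $u \leq r$), a short case split on whether $u = s_q$ or $u = |t_q|$ yields $d_k(q, p) \leq 5r^2 < \alpha r$, so $p \in B_k(q, \alpha r) \cap P_e$.

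The main obstacle is the CC half. A non-radial choice of $(p_x, p_y)$ would introduce a twist term of order $s_q \cdot |(\Delta x, \Delta y)|$ into the $t$-coordinate of $q^{-1} p$, whose contribution to the Koranyi norm scales like $\sqrt{s_q \cdot |\Delta|}$ and destroys the crucial $O(r^2)$ bound. Choosing $p$ along the horizontal ray through $q$ kills this term exactly, and Lemma~\ref{intobs}(3) is tuned so that $F$ always has a point within distance $4u^2$ of $u = \max(s_q, |t_q|)$, which is simultaneously the smallest horizontal radius compatible with $p$ lying in the cone $\Lambda$. Once these two choices are coupled, the estimate $d_k(q, p) = O(r^2)$ drops out and beats any linear $\alpha r$ bound for small $r$.
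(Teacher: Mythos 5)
Your proof is correct. The Euclidean-porosity half is essentially the paper's argument: holes at $0$ from the cone condition, and at other points radial rescaling of the one-dimensional porosity holes of $A_{n,k}$, which avoid all of $\bigcup A_{m,l}\cup\{0\}$ because they sit in the open middle-third gaps interior to $I_{n,k}$. The CC half, however, takes a genuinely different and cleaner route. The paper splits into two cases according to whether $q\in\Lambda$ or not; off the cone it perturbs the vertical coordinate by $\sqrt{15}s^{2}$ while pushing the radius out by $s\sim\lambda\sqrt{t}/2$, which produces a point at Koranyi distance $2s$ — of order $\lambda d_{k}(q)$, so it only just fits inside the test ball and forces the somewhat delicate estimate \eqref{yy}. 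You instead keep $t_{q}$ fixed and push the horizontal radius out to $s'\in[u,u+4u^{2}]\cap\bigcup A_{n,k}$ with $u=\max(s_q,|t_q|)$, exploiting that $|t_{q}|\leq d_{k}(q)^{2}$ by \eqref{simpleest}: the correction is then $O(d_{k}(q)^{2})$, which beats $\alpha d_{k}(q)$ outright for small $d_k(q)$, handles both cases in one formula, and makes the vanishing of the twist term (radial displacement) do all the work. The one convention you rely on, as does the paper, is that Koranyi and CC porosity coincide by Lipschitz equivalence; it would be worth saying so explicitly. Your bookkeeping ($s_{q}\leq r$, $|t_{q}|\leq r^{2}$, hence $d_{k}(q,p)=s'-s_{q}\leq 5r^{2}<\alpha r$ for $r<\alpha/5$) checks out, including the degenerate case $s_{q}=0$ and the requirement $0<u<1$ for Lemma \ref{intobs}(3), which holds for $q\neq 0$ close to $0$.
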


We prove Proposition \ref{enotcc} in Claim \ref{enotccclaim1} and Claim \ref{enotccclaim2}.

\begin{claim}\label{enotccclaim1}
The set $P_{e}$ is porous with respect to the Euclidean distance.
\end{claim}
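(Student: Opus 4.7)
My plan is to verify the $\lambda$-porosity of $P_{e}$ directly with the uniform constant $\lambda = 1/3$, splitting into the case $a = 0$ and the case $a \in P_{e} \setminus \{0\}$. At the origin I would exploit that the $t$-axis is far from the cone $\Lambda$ in the Euclidean sense. Away from the origin I would pull back the $(1/3)$-porosity of a single Cantor translate $A_{n,k}$ (Lemma \ref{intobs}(2)) to a Euclidean ball in $\bbR^{3}$ by moving radially in the horizontal plane.

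\textbf{Case $a = 0$.} For $x_{n} := (0,0,t_{n}) \in \mathbb{H}^{1}$ with $t_{n} \to 0$ and $t_{n} \neq 0$, I claim $B_{e}(x_{n}, \lambda |t_{n}|) \cap P_{e} = \emptyset$ for $\lambda < 1/2$. The verification is a direct incompatibility: a point $(x,y,t)$ in the ball satisfies $|(x,y)| < \lambda |t_{n}|$ and $|t| > (1-\lambda)|t_{n}|$, while a nonzero element of $P_{e}$ satisfies the cone condition $|t| \leq |(x,y)|$, forcing $\lambda > 1/2$. The origin itself sits outside the ball as long as $\lambda < 1$, so $\lambda = 1/3$ works at $a = 0$.

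\textbf{Case $a \in P_{e} \setminus \{0\}$.} Writing $a = (x_{0}, y_{0}, t_{0})$ and $r_{0} := |(x_{0}, y_{0})|$, I would first observe $r_{0} > 0$ (otherwise the definition of $P_{e}$ forces $a = 0$, since $0 \notin A_{n,k}$ for any $n,k$), so $r_{0} \in A_{n,k}$ for some $n, k$. Using Lemma \ref{intobs}(2), I would extract a sequence $s_{j} \to r_{0}$ in $\bbR$ with $\rho_{j} \geq (1/3)|s_{j} - r_{0}|$ so that the open hole $(s_{j} - \rho_{j}, s_{j} + \rho_{j})$ misses $A_{n,k}$ and, for $j$ large, is strictly inside the ambient interval $[2^{-n} + k 2^{-2n}, 2^{-n} + (k+1)2^{-2n}]$. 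Then I would set $a_{j} := (s_{j} x_{0}/r_{0}, s_{j} y_{0}/r_{0}, t_{0})$, which gives $d_{e}(a_{j}, a) = |s_{j} - r_{0}|$, and argue $B_{e}(a_{j}, \rho_{j}) \cap P_{e} = \emptyset$: the reverse triangle inequality forces $|(x,y)| \in (s_{j} - \rho_{j}, s_{j} + \rho_{j})$ for every $(x,y,t)$ in the ball, and this interval avoids $\bigcup_{m,l} A_{m,l}$, while also $(x,y,t) \neq 0$ because $s_{j} - \rho_{j} > 0$ eventually.

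\textbf{Main obstacle.} The delicate point is to guarantee that the radial open hole avoids the \emph{entire} union $\bigcup_{m,l} A_{m,l}$, not only $A_{n,k}$. This rests on the observation that each $A_{m,l}$ is confined to the window $[2^{-m}, 2^{-m+1}]$, so windows for different values of $m$ are disjoint except at the dyadic points $2^{-m}$, and within a fixed window the ambient intervals given by Lemma \ref{intobs}(1) meet only at shared endpoints (themselves points of the adjacent Cantor copies). Combined with the fact that the $(1/3)$-porosity holes of $A_{n,k}$ can be chosen strictly inside the ambient interval for $j$ large, this gives the uniform porosity constant $\lambda = 1/3$ and completes the claim.
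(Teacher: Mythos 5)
Your proposal is correct and follows essentially the same route as the paper's proof: at the origin it uses Euclidean balls centred on the $t$-axis, which are disjoint from the cone $\Lambda$, and at the remaining points it transfers the $(1/3)$-porosity of the relevant $A_{n,k}$ to $\bbR^{3}$ by perturbing radially in the horizontal plane and applying the reverse triangle inequality. The only difference is cosmetic: you spell out in more detail why the radial holes avoid the whole union $\bigcup_{m,l}A_{m,l}$ and the point $0$, which the paper dispatches by noting that the sets $A_{m,l}$ meet only at endpoints.
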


\begin{proof}
We first verify that $P_{e}$ is porous with respect to the Euclidean distance at $0$. Since $P_{e}\subset \Lambda$, it suffices to show
\begin{equation}\label{eucat0}
B_{e}((0,0,1/n),1/3n)\cap \Lambda = \varnothing \mbox{ for every }n\in \mathbb{N}.
\end{equation}
Let $(x,y,t)\in B_{e}((0,0,1/n),1/3n)$. Then $|(x,y)|<1/3n$ and $|t|>2/3n$, in particular $|t|>|(x,y)|$. Hence $(x,y,t)\notin \Lambda$, proving \eqref{eucat0}.

Now we show Euclidean porosity at the remaining points of $P_{e}$: suppose that $(x,y,t) \in P_{e}\setminus \{0\}$. Then there exists $p\in \mathbb{N}$ and $0\leq k\leq 2^{p}-1$ such that $|(x,y)|\in A_{p,k}$. Since $A_{p,k}$ is $(1/3)$-porous in $[2^{-p} + k2^{-2p}, 2^{-p} + (k+1)2^{-2p}]$ at $|(x,y)|$ and the sets $A_{p,k}$ only meet at their endpoints, we may find $r_{n}>0$ with $r_{n}\to |(x,y)|$ such that:
\begin{equation}\label{porosityAnk}
B_{e}(r_{n}, |r_{n}-|(x,y)||/3)\cap (A_{m,l} \cup \{0\})=\varnothing
\end{equation}
for every $m\in \mathbb{N}$ and every $0\leq l\leq 2^{m}-1$.

Define $(x_{n},y_{n})=r_{n}(x,y)/|(x,y)|$. Notice
\begin{equation}\label{july91}
|(x_{n},y_{n})|=r_{n} \quad \mbox{and} \quad |(x_{n},y_{n})-(x,y)|=|r_{n}-|(x,y)||.
\end{equation}
Since $r_{n}\to |(x,y)|$, we have $(x_{n}, y_{n}, t)\to (x,y,t)$. We claim that for $n\in \mathbb{N}$:
\begin{equation}\label{P1porosity}
B_{e}((x_{n},y_{n},t), d_{e}((x_{n},y_{n},t),(x,y,t))/3)\cap P_{e}=\varnothing.
\end{equation}
For this, suppose:
\[(a,b,c)\in B_{e}((x_{n},y_{n},t), d_{e}((x_{n},y_{n},t),(x,y,t))/3).\]
Using \eqref{july91}, we have
\[d_{e}((x_{n},y_{n},t),(x,y,t))=|(x_{n},y_{n})-(x,y)|=|r_{n}-|(x,y)||.\]
Hence, using the above equality, the triangle inequality, and \eqref{july91} again,
\[ ||(a,b)|-r_{n}| \leq |(a,b)-(x_{n},y_{n})|\leq |r_{n}-|(x,y)||/3.\]
This implies
\[|(a,b)|\in B_{e}(r_{n}, |r_{n}-|(x,y)||/3).\]
Using \eqref{porosityAnk}, we deduce that $|(a,b)|\neq 0$ and $|(a,b)|\notin A_{m,l}$ for any choice of $m\in \mathbb{N}$ and $0\leq l\leq 2^{m}-1$. Hence $(a,b,c)\notin P_{e}$, verifying \eqref{P1porosity}. This shows that $P_{e}$ is Euclidean porous at $(x,y,t)$ and completes the proof.
\end{proof}

\begin{claim}\label{enotccclaim2}
The set $P_{e}$ is not porous with respect to the CC distance at $0$.
\end{claim}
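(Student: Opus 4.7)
The plan is to verify that, for every fixed $\lambda\in(0,1)$, there is no sequence of points approaching $0$ along which balls of relative radius $\lambda$ miss $P_e$. Concretely, I will show that there exists $\delta>0$ such that every $x\in\mathbb{H}^1$ with $0<d_k(x)<\delta$ satisfies $B_k(x,\lambda d_k(x))\cap P_e\neq\varnothing$. This rules out $\lambda$-porosity at $0$ for every $\lambda$, hence rules out porosity at $0$.

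Given such an $x=(x_1,x_2,x_3)$, set $r=|(x_1,x_2)|$ and $\rho=d_k(x)$; by \eqref{Koranyi} we have $r\le\rho$ and $|x_3|\le\rho^2$. Put $\tau:=\max(r,|x_3|)>0$. By Lemma \ref{intobs}(3) (applicable once $\rho$ is small enough so $\tau<1$), there exist $n,k$ and $s\in A_{n,k}$ with $\tau\le s\le\tau+4\tau^2$. Now I define the candidate point $(u,v,w)\in P_e$ by choosing $(u,v)$ on the ray through $(x_1,x_2)$: if $r>0$, set $(u,v)=(s/r)(x_1,x_2)$, and if $r=0$, set $(u,v)=(s,0)$; in either case let $w=x_3$. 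Then $|(u,v)|=s\in A_{n,k}$ and $|w|=|x_3|\le\tau\le s$, so $(u,v,w)\in\Lambda$ and consequently $(u,v,w)\in P_e$.

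The key computation is the distance $d_k(x,(u,v,w))$. Using \eqref{Hgplaw}, one finds
\[x^{-1}(u,v,w)=\bigl(u-x_1,\,v-x_2,\,w-x_3+2(x_1 v - x_2 u)\bigr).\]
The crucial cancellation is that $(u,v)$ is a non-negative scalar multiple of $(x_1,x_2)$ (or both pairs are zero), so $x_1 v-x_2 u=0$. Combined with $w=x_3$, the third coordinate of $x^{-1}(u,v,w)$ vanishes, and the horizontal difference has squared norm $(s-r)^2$. Therefore
\[d_k(x,(u,v,w))=\bigl((s-r)^4\bigr)^{1/4}=|s-r|.\]

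It remains to bound $|s-r|$ by $\lambda\rho$. If $r\ge|x_3|$ then $\tau=r$ and $|s-r|\le 4r^2\le 4\rho^2$. If $r<|x_3|$ then $\tau=|x_3|$ and $|s-r|=s-r\le |x_3|-r+4x_3^2\le \rho^2+4\rho^4$. In every case, for $\rho\le 1$ one gets $|s-r|\le 5\rho^2$, so $d_k(x,(u,v,w))<\lambda\rho$ whenever $\rho<\lambda/5$. Choosing $\delta=\lambda/5$ finishes the argument.

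The main obstacle I anticipate is purely notational rather than conceptual: handling the two degenerate cases $r=0$ and $|x_3|>r$ uniformly, and confirming that the twist term in the Koranyi product really vanishes along the chosen ray (this is what makes the strategy work, since otherwise a vertical component of order $r|s-r|$ would enter the distance and would be too large when $r$ is comparable to $\rho$). Once the ray construction and the vanishing of the twist are in place, the Cantor-type intersection property in Lemma \ref{intobs}(3) immediately delivers a $P_e$-point at distance $O(\rho^2)$, which is $o(\rho)$ and hence defeats porosity at $0$.
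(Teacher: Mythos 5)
Your argument is correct, and it is cleaner than the paper's own proof. The paper splits into two cases according to whether the centre $(x,y,t)$ of the prospective hole lies in the cone $\Lambda$ or not; outside $\Lambda$ it must also perturb the vertical coordinate (the point $p_s$ has last coordinate $t-\sqrt{15}s^2$), which costs a Koranyi distance $2s$ with $s\sim\lambda\sqrt{t}$, i.e.\ a displacement comparable to the full allowed radius $\lambda d_k(x,y,t)$. You avoid the case split entirely by keeping the vertical coordinate fixed and pushing the horizontal radius out to $s\in[\tau,\tau+4\tau^2]$ with $\tau=\max(r,|x_3|)$: membership in $\Lambda$ is then automatic from $|x_3|\le\tau\le s$, and the vanishing of the twist $x_1v-x_2u$ along the ray gives $d_k=|s-r|$ exactly. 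Since $r\le\rho$ and $|x_3|\le\rho^2$ force $|s-r|\le 5\rho^2$, you get the sharper quantitative statement that every ball $B_k(x,5\,d_k(x)^2)$ already meets $P_e$, which is stronger than mere failure of $\lambda$-porosity for each fixed $\lambda$. All the individual steps check out: the inverse and product formulas from \eqref{Hgplaw}, the identity $d_k(x,(u,v,w))=|s-r|$ including the degenerate cases $r=0$ and $r<|x_3|$, and the applicability of Lemma \ref{intobs}(3) since $0<\tau\le\rho<1$ once $\rho<\lambda/5$. The only cosmetic remark is that one should say explicitly that a sequence witnessing $\lambda$-porosity at $0$ would eventually have $d_k(x_n)<\lambda/5$, contradicting your uniform statement; this is implicit in your first paragraph and matches the logical structure of the paper's proof.
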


\begin{proof}

Let $\lambda>0$. It suffices to show that if $(x,y,t) \in \mathbb{H}^{1}$ and $d_{k}(x,y,t)$ is sufficiently small, then:
\begin{equation}\label{nonCCporous}
B_{k}( (x, y ,t), \lambda d_{k}(x, y, t))\cap P_{e} \neq \varnothing.
\end{equation}

\vspace{0.3cm}

\textit{Case 1:} Suppose $(x,y,t)\notin \Lambda$, so $|(x,y)| < |t|$. For each $s>0$, define:
\[p_{s}:=\left(x+\frac{sx}{|(x,y)|}, y+\frac{sy}{|(x,y)|}, t-\sqrt{15}s^2\right).\]
Without loss of generality we assume $t>0$. In the case $t<0$, one should instead choose $t+\sqrt{15}s^2$ as the final coordinate in the definition of $p_{s}$.
We first notice that $d_{k}(p_{s},(x,y,t))=2s$. Therefore for every $0<s<\lambda\sqrt{t}/2$ we have $p_s\in B_k((x,y,t), \lambda d_{k}(x, y, t))$. Since
\[\left| \left( x+\frac{sx}{|(x,y)|}, y+\frac{sy}{|(x,y)|} \right) \right| = |(x,y)|+s,\]
we see $p_s\in \Lambda$ if and only if
\[
t-\sqrt{15}s^2\leq |(x,y)|+s
\]
so it suffices to choose $s$ such that
\[
t-\sqrt{15}s^2\leq s.
\]
If $d_k(x,y,t)$ is sufficiently small, which forces $t$ to be small, this holds for every $s \in (\lambda\sqrt{t}/8, \lambda\sqrt{t}/2)$.
We now show that $p_s\in P_{e}$ for some such $s$. This holds provided the interval
\[ \left(|(x,y)|+ \frac{\lambda\sqrt{t}}{8},\ |(x,y)|+ \frac{\lambda\sqrt{t}}{2} \right)\]
intersects some set $A_{n,k}$. By making $d_{k}(x,y,t)$ small, we may assume that $|(x,y)|+ \frac{\lambda\sqrt{t}}{8}$ lies in $(0,1)$. By Lemma \ref{intobs}, it then suffices to prove that 
\begin{align}\label{yy}
\left( |(x,y)|+\frac{\lambda \sqrt{t}}{2} \right) - \left(|(x,y)|+\frac{\lambda\sqrt{t}}{8}\right)-5\left(|(x,y)|+\frac{\lambda\sqrt{t}}{8}\right)^2\geq 0.
\end{align}
The factor 5 instead of 4 in \eqref{yy} takes into account that Lemma \ref{intobs} requires a closed interval $[\theta, \theta+4\theta^{2}]$ rather than an open one. To prove \eqref{yy}, let $\delta=3\lambda/320$ and assume $|(x,y)|<\delta$, which implies $|(x,y)|^2<\delta |(x,y)|$. Using also the inequality $(a+b)^2\leq 2a^{2}+2b^{2}$, $t, \lambda \in (0,1)$ and $|(x,y)| < |t|$, we estimate as follows:
\begin{align*}
&\left(|(x,y)|+\frac{\lambda\sqrt{t}}{2}\right) - \left(|(x,y)|+\frac{\lambda\sqrt{t}}{8}\right)-5\left(|(x,y)|+\frac{\lambda\sqrt{t}}{8}\right)^2\\
&\qquad > \frac{7\lambda\sqrt{t}}{32}-10\delta |(x,y)|\\
&\qquad > \frac{7\lambda\sqrt{t}}{32}-10\delta \sqrt{t}\\
&\qquad \geq 0.
\end{align*}
This verifies \eqref{yy} so $p_{s}\in P_{e}$ for some $s$, verifying \eqref{nonCCporous} and completing the proof in this case.

\vspace{0.3cm}

\textit{Case 2:} Suppose $(x,y,t)\in \Lambda$, so $|t|\leq |(x,y)|$. For each $s>0$, define
\begin{equation}\label{psform}
p_{s}= \left(x+\frac{sx}{|(x,y)|}, y+\frac{sy}{|(x,y)|}, t\right).
\end{equation}
It is easy to show that $d_{k}(p_{s},(x,y,t))=s$ so $p_{s}\in B_{k}( (x, y ,t), \lambda d_{k}(x, y, t))$ whenever $0<s<\lambda |(x,y)|$. Clearly also
\begin{equation}\label{psprojnorm}
 \left| \left( x+\frac{sx}{|(x,y)|}, y+\frac{sy}{|(x,y)|} \right) \right| = |(x,y)|+s.
\end{equation}
Since $s>0$ and $|t|\leq |(x,y)|$, we deduce that $p_{s}\in \Lambda$.

For sufficiently small $|(x,y)|$, the interval $(|(x,y)|, |(x,y)|+\lambda |(x,y)|)$ will contain a subinterval of the form $[\theta, \theta + 4\theta^{2}]$ for some $\theta \in (0,1)$, which by Lemma \ref{intobs} necessarily meets some set $A_{n,k}$. Using \eqref{psform} and \eqref{psprojnorm}, this yields \eqref{nonCCporous} and proves the claim in this case.
\end{proof}

\begin{proposition}\label{ccnote}
Define the cusp $\Upsilon$ by
\[\Upsilon:=\{(x,y,t)\in \mathbb{H}^1 \colon |t| \geq 2|(x,y)|^{2}\}.\]
Define
\[P_{c}:=\Upsilon \cap \Big(\{0\}\cup \bigcup_{n=1}^{\infty} \bigcup_{k=0}^{2^n - 1} \{(x,y,t)\in\mathbb{H}^1 \colon |(x,y)|\in A_{n,k}  \}\Big).\]

The set $P_{c}$ is porous with respect to the CC distance, but not porous at the point $0$ with respect to the Euclidean distance.
\end{proposition}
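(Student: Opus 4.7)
My plan is to mirror the two-claim structure of Proposition \ref{enotcc}, establishing CC porosity of $P_c$ (at $0$ and at every other point of $P_c$) and then the failure of Euclidean porosity of $P_c$ at $0$, with the roles of the two distances reversed.

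For CC porosity at the origin I would use the probe points $p_n := (1/n, 0, 0)$, which lie off the cusp and satisfy $d_k(p_n) = 1/n$. A direct expansion of the group law shows that, for $\lambda$ small (say $\lambda = 1/10$), every $(a,b,c) \in B_k(p_n, \lambda/n)$ satisfies $|a - 1/n| < \lambda/n$ and $|c + 2b/n| < \lambda^2/n^2$, whence $a^2+b^2 > 1/(2n^2)$ and $|c| < (\lambda^2+2\lambda)/n^2 < 2(a^2+b^2)$, so $(a,b,c) \notin \Upsilon \supset P_c$. For CC porosity at a point $(x,y,t) \in P_c \setminus \{0\}$, I would transfer the $(1/3)$-porosity of the relevant piece $A_{p,k}$ to $P_c$ exactly as in Claim \ref{enotccclaim1}: choose $r_n \to |(x,y)|$ with $B_e(r_n, |r_n - |(x,y)||/3)$ disjoint from $\{0\}\cup\bigcup_{m,l} A_{m,l}$ and set $q_n := (r_n x/|(x,y)|,\, r_n y/|(x,y)|,\, t)$; the group law gives $(x,y,t)^{-1} q_n = ((r_n-|(x,y)|)x/|(x,y)|,\, (r_n-|(x,y)|)y/|(x,y)|,\, 0)$, so $d_k(q_n,(x,y,t)) = |r_n - |(x,y)||$, and \eqref{simpleest} applied to $q_n^{-1}(a,b,c)$ forces any $(a,b,c) \in B_k(q_n, |r_n - |(x,y)||/3)$ to have $||(a,b)| - r_n| < |r_n - |(x,y)||/3$, so $|(a,b)| \notin \{0\}\cup\bigcup_{m,l} A_{m,l}$ and $(a,b,c) \notin P_c$.

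For the failure of Euclidean porosity at $0$, given $\lambda \in (0,1)$ I would show that any $(x,y,t) \neq 0$ with $\delta := d_e((x,y,t),0)$ sufficiently small admits a point of $P_c$ at Euclidean distance $\leq \lambda \delta$. The construction perturbs $|(x,y)|$ into $\bigcup A_{n,k}$ using Lemma \ref{intobs}, and inflates $|t|$ if necessary to cross into $\Upsilon$. Setting $\eta := \lambda/100$, split into two cases. In Case 1 ($|(x,y)| > \eta\delta$) apply Lemma \ref{intobs} at $\theta = |(x,y)|$ to pick $r \in [|(x,y)|,\, |(x,y)|+4|(x,y)|^2] \cap \bigcup A_{n,k}$ and set $(x',y') := (r/|(x,y)|)(x,y)$; in Case 2 ($|(x,y)| \leq \eta \delta$) apply Lemma \ref{intobs} at a small $\theta \sim \lambda\delta$ to pick $r \in (0, \lambda\delta/2) \cap \bigcup A_{n,k}$ and set $(x',y') := (r,0)$. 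In both cases define $t' := \mathrm{sign}(t) \max(|t|, 2r^2)$ (with an arbitrary sign convention at $t=0$); then $|t'| \geq 2r^2 = 2|(x',y')|^2$ places $(x',y',t')$ in $\Upsilon$, and $|(x',y')| = r \in \bigcup A_{n,k}$ places it in $P_c$. The perturbation $|r - |(x,y)||$ is $O(\delta^2)$ in Case 1 and $O(\lambda\delta)$ in Case 2, while $|t'-t| \leq 2r^2$ is $O(\delta^2)$ in both cases, so the total Euclidean distance is $\leq \lambda\delta$ once $\delta$ is sufficiently small.

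The main obstacle is the non-porosity step: both perturbations needed to reach $P_c$ have to be carefully controlled against the linear scale $\lambda\delta$, and the case split on $|(x,y)|$ is forced because Lemma \ref{intobs} applied at $\theta = |(x,y)|$ is only useful once $\theta$ is positive and not too small relative to $\delta$; when $|(x,y)|$ is tiny one must pick a fresh direction for $(x',y')$ and choose $r$ at scale $\lambda\delta$ directly.
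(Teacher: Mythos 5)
Your proof is correct, and its first half (CC porosity of $P_c$: at $0$ via the probe points $(1/n,0,0)$ and a group-law computation showing the Koranyi ball misses the cusp $\Upsilon$, and at the remaining points via the radial rescalings $(r_nx/|(x,y)|,\,r_ny/|(x,y)|,\,t)$ transferring the $(1/3)$-porosity of $A_{p,k}$) is essentially identical to the paper's argument, up to the choice of porosity constant. The second half is where you genuinely diverge. The paper works with the one-parameter family $q_s=\bigl(x-\tfrac{sx}{|(x,y)|},\,y-\tfrac{sy}{|(x,y)|},\,t+s\bigr)$, moving radially inward while pushing $t$ up linearly, and finds a single $s$ in the linear-sized window $\bigl[\lambda|(x,y)|/(2\sqrt2),\,\lambda|(x,y)|/\sqrt2\bigr)$ for which the cusp condition $t+s\geq 2(|(x,y)|-s)^2$ and the condition $|(x,y)|-s\in\bigcup A_{n,k}$ hold simultaneously; the point is that the window is linear in $|(x,y)|$ while the obstructions are quadratic. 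You instead decouple the two constraints: snap $|(x,y)|$ outward onto $\bigcup A_{n,k}$ at cost at most $4|(x,y)|^2=O(\delta^2)$, then inflate $|t|$ to $2r^2$ at cost $O(\delta^2)$, so the total Euclidean correction is quadratic in $\delta$ and eventually below $\lambda\delta$. Your route buys two things: it avoids having to satisfy two conditions with one parameter, and, via your Case 2, it explicitly covers probe points on (or very near) the $t$-axis, where the paper's formula for $q_s$ divides by $|(x,y)|$ and its interval argument degenerates -- strictly speaking the paper's proof of Claim \ref{ccnoteclaim2} needs a separate word for that case as well. What the paper's version buys in exchange is uniformity: a single formula with no case split, formally parallel to the companion construction $p_s$ in Claim \ref{enotccclaim2}.
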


We prove Proposition \ref{ccnote} in Claim \ref{ccnoteclaim1} and Claim \ref{ccnoteclaim2}.

\begin{claim}\label{ccnoteclaim1}
The set $P_{c}$ is porous with respect to the CC distance.
\end{claim}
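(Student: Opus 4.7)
My plan is to verify the CC porosity (equivalently, Koranyi porosity) of $P_c$ at each of its points with the uniform constant $\lambda = 1/3$, treating the origin and the remaining points separately. In both cases the idea is to exhibit a Koranyi ball that avoids $P_c$ either because it lies outside the cusp $\Upsilon$ (when $a = 0$) or because its horizontal projection lies inside a Euclidean gap in $\{0\}\cup\bigcup_{p',l'} A_{p',l'}$ (when $a\neq 0$).

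For porosity at $0$, I would exploit the fact that $\Upsilon$ is invariant under the Carnot dilations $\delta_\lambda(x,y,t) = (\lambda x,\lambda y,\lambda^2 t)$, since its defining inequality $|t|\geq 2|(x,y)|^2$ is $2$-homogeneous. Using \eqref{Hgplaw} and the estimate \eqref{simpleest}, I would first check directly that the reference point $q=(1,0,0)\notin\Upsilon$ admits a definite gap, namely $B_k(q,1/3)\cap\Upsilon=\varnothing$. Setting $p_n:=\delta_{1/n}(q)=(1/n,0,0)$, the homogeneity $d_k(\delta_{1/n}(a),\delta_{1/n}(b))=d_k(a,b)/n$ together with the dilation invariance of $\Upsilon$ gives $d_k(p_n,0)=1/n$ and $B_k(p_n,1/(3n))\cap\Upsilon=\varnothing$. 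Since $P_c\subset\Upsilon$, this is $(1/3)$-porosity of $P_c$ at $0$.

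For porosity at $(x,y,t)\in P_c\setminus\{0\}$, I would mirror the structure of Claim \ref{enotccclaim1}, but with the Koranyi norm in place of the Euclidean one. Pick $p,l$ with $|(x,y)|\in A_{p,l}$; the porosity of each $A_{p,l}$ combined with the fact that distinct $A_{p',l'}$ meet only at endpoints yields $r_m\to|(x,y)|$ with
\[
B_e\bigl(r_m,\tfrac{1}{3}|r_m-|(x,y)||\bigr)\cap\Bigl(\{0\}\cup\bigcup_{p',l'}A_{p',l'}\Bigr)=\varnothing.
\]
I would then set $q_m:=\bigl(r_m x/|(x,y)|,\; r_m y/|(x,y)|,\; t\bigr)$. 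The key computation is that, because the horizontal projection of $q_m$ is parallel to $(x,y)$ and the $t$-coordinate is unchanged, the symplectic contribution $2(xy_m-yx_m)$ in the third coordinate of $(x,y,t)^{-1}q_m$ vanishes and the $t$-terms cancel, leaving $d_k(q_m,(x,y,t))=|r_m-|(x,y)||$. For any $(a,b,c)\in B_k(q_m,\tfrac{1}{3}|r_m-|(x,y)||)$, estimate \eqref{simpleest} applied to $q_m^{-1}(a,b,c)$ bounds $|(a,b)-(x_m,y_m)|$, and hence $||(a,b)|-r_m|$, by $\tfrac{1}{3}|r_m-|(x,y)||$; by the choice of $r_m$ this rules out $|(a,b)|=0$ and $|(a,b)|\in A_{p',l'}$, so $(a,b,c)\notin P_c$.

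The main technical step will be the identity $d_k(q_m,(x,y,t))=|r_m-|(x,y)||$, which is what makes the Koranyi argument parallel to the Euclidean one and depends critically on the vanishing of the symplectic term. Everything else is straightforward bookkeeping, and since both cases give porosity constant $1/3$, no further step is needed to arrange a uniform $\lambda$.
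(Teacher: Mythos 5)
Your proposal is correct and follows essentially the same route as the paper: the key identity $d_k(q_m,(x,y,t))=|r_m-|(x,y)||$ via the vanishing symplectic term, the use of \eqref{simpleest} to project the Koranyi ball onto a Euclidean gap avoiding $\{0\}\cup\bigcup A_{p',l'}$, and the gap $B_k((1/n,0,0),1/(3n))\cap\Upsilon=\varnothing$ at the origin are all exactly the paper's argument. The only cosmetic difference is that you obtain the origin case by verifying the gap at scale one and applying dilation invariance of $\Upsilon$, whereas the paper performs the same computation directly at each scale $1/n$.
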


\begin{proof}
We first verify that $P_{c}$ is porous with respect to the Koranyi distance at $0$. Since $P_{c}\subset \Upsilon$, it suffices to prove that
\begin{equation}\label{july11eq}
B_{k}((1/n,0,0), 1/3n)\cap \Upsilon=\varnothing \quad \mbox{ for every }n\in\mathbb{N}.
\end{equation}
If $(x,y,t)\in B_{k}((1/n,0,0), 1/3n)$ then
\[\Big|(x-\frac{1}{n},y)\Big|\leq \frac{1}{3n}\quad \mbox{ and }\quad \Big|t+\frac{2y}{n}\Big|\leq \frac{1}{9n^2}.\]
In particular:
\[\frac{2}{3n} \leq x\leq \frac{4}{3n},\quad |y|\leq \frac{1}{3n}\quad \mbox{and}\quad \Big|t+\frac{2y}{n}\Big|\leq \frac{1}{9n^2}.\]
Therefore:
\begin{align*}
|t|\leq \Big|t+\frac{2y}{n}\Big|+\frac{2|y|}{n} \leq \frac{1}{9n^2}+\frac{2}{3n^2}\leq \frac{x^2}{4}+\frac{3x^2}{2}< 2 |(x,y)|^2.
\end{align*}
We conclude that $(x,y,t)\notin \Upsilon$, verifying \eqref{july11eq}.

We now show CC porosity at the remaining points of $P_{c}$. Suppose that $(x,y,t) \in P_{c}\setminus \{0\}$. Using the definition of $P_{c}$, there exists $p\in \mathbb{N}$ and $0\leq k\leq 2^{p}-1$ such that $|(x,y)|\in A_{p,k}$. Since $A_{p,k}$ is $(1/3)$-porous as a subset of the interval $[2^{-p} + k2^{-2p}, 2^{-p} + (k+1)2^{-2p}]$ and the sets $A_{p,k}$ only meet at their endpoints, we may find $r_{n}>0$ with $r_{n}\to |(x,y)|$ such that:
\begin{equation}\label{usefulpc}
B_{e}(r_{n}, |r_{n}-|(x,y)||/3)\cap (A_{m,l} \cup \{0\})=\varnothing
\end{equation}
for every $m\in \mathbb{N}$ and every $0\leq l\leq 2^{m}-1$.

Define $(x_{n},y_{n})=r_{n}(x,y)/|(x,y)|$. Notice that $|(x_{n},y_{n})|=r_{n}$ and
\[|(x_{n},y_{n})-(x,y)|=|r_{n}-|(x,y)||.\]
Clearly $(x_{n}, y_{n}, t)\to (x,y,t)$. We claim that for every $n\in \mathbb{N}$:
\begin{equation}\label{usefulpc2}
B_{k}((x_{n},y_{n},t), d_{k}((x_{n},y_{n},t),(x,y,t))/3)\cap P_{c}=\varnothing.
\end{equation}
For this, suppose
\[(a,b,c)\in B_{k}((x_{n},y_{n},t), d_{k}((x_{n},y_{n},t),(x,y,t))/3).\]
Using \eqref{Hgplaw}, we see:
\[d_{k}((x_{n},y_{n},t),(x,y,t))=|(x-x_{n}, y-y_{n})|=|r_n-|(x,y)||.\]
Hence
\begin{align*}
|(a,b)-(x_{n},y_{n})| &\leq d_{k}((x_{n},y_{n},t),(x,y,t))/3\\
&= |r_n-|(x,y)||/3.
\end{align*}
We deduce that
\[|(a,b)|\in B_{e}(r_{n}, |r_{n}-|(x,y)||/3).\]
Using \eqref{usefulpc}, we deduce that $|(a,b)|\neq 0$ and $|(a,b)|\notin A_{m,l}$ for any choice of $m\in \mathbb{N}$ and $0\leq l\leq 2^{m}-1$. Hence $(a,b,c)\notin P_{c}$, verifying \eqref{usefulpc2} which proves that $P_{c}$ is porous with respect to the Koranyi metric.
\end{proof}

\begin{claim}\label{ccnoteclaim2}
The set $P_{c}$ is not porous with respect to the Euclidean distance at the point $0$.
\end{claim}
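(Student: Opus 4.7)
The plan is to mirror the structure of Claim \ref{enotccclaim2}. I will fix $\lambda \in (0,1)$ and show that whenever $q = (x,y,t) \in \mathbb{H}^{1}$ satisfies $d_{e}(q,0)$ sufficiently small and $q \neq 0$, we have $B_{e}(q, \lambda d_{e}(q,0)) \cap P_{c} \neq \varnothing$. The reflection $(x,y,t) \mapsto (x,y,-t)$ fixes $0$, is a Euclidean isometry, and preserves $P_{c}$ (the cusp $\Upsilon$ and the condition $|(x,y)| \in A_{n,k}$ are both symmetric in the sign of $t$), so I will assume $t \geq 0$ throughout. The common idea across cases is to use Lemma \ref{intobs}(3) to find $s \in \bigcup_{n,k} A_{n,k}$ extremely close to $|(x,y)|$ and then to construct a candidate $p = (x',y',t') \in P_{c}$ with $|(x',y')| = s$, distributing any small adjustment between the horizontal and vertical coordinates so as to stay inside $\Upsilon$.

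I will split into two cases according to whether $q \in \Upsilon$. First, if $q \notin \Upsilon$, so $0 \leq t < 2|(x,y)|^{2}$ and in particular $|(x,y)| > 0$, I will apply Lemma \ref{intobs}(3) with $\theta = |(x,y)|$ to obtain $s \in A_{n,k}$ with $s - |(x,y)| \leq 4|(x,y)|^{2}$, and take $p = (s(x,y)/|(x,y)|,\, 2s^{2})$. This point lies on the boundary of $\Upsilon$ with $|(x',y')| = s \in A_{n,k}$, hence $p \in P_{c}$. Both $s - |(x,y)|$ and $2s^{2} - t$ are bounded by a constant multiple of $|(x,y)|^{2}$, so $d_{e}(p,q)$ is $O(|(x,y)|^{2})$, while $d_{e}(q,0) \geq |(x,y)|$; the ratio is $O(|(x,y)|)$, which drops below $\lambda$ once $|(x,y)|$ is small enough.

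If instead $q \in \Upsilon$ with $|(x,y)| > 0$, a symmetric argument shrinks rather than extends the horizontal norm: I will apply Lemma \ref{intobs}(3) with $\theta = |(x,y)| - 4|(x,y)|^{2}$ to obtain $s \in A_{n,k}$ satisfying $|(x,y)| - 4|(x,y)|^{2} \leq s \leq |(x,y)|$, where the upper bound comes from $\theta + 4\theta^{2} \leq |(x,y)|$ for small $|(x,y)|$. Then $p = (s(x,y)/|(x,y)|,\, t)$ belongs to $\Upsilon$ because $2s^{2} \leq 2|(x,y)|^{2} \leq t$, and $d_{e}(p,q) = |(x,y)| - s \leq 4|(x,y)|^{2} < \lambda|(x,y)| \leq \lambda d_{e}(q,0)$ for $|(x,y)|$ small.

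The delicate subcase, which I expect to be the main obstacle, is $q = (0,0,t)$ with $t > 0$ small, so $d_{e}(q,0) = t$. Here the radial horizontal trick above is unavailable, and every $p \in P_{c} \setminus \{0\}$ satisfies $|(x',y')| \geq 2^{-n}$ for some $n$, so $p$ cannot be arbitrarily close to $q$ unless one exploits that $\inf \bigcup_{n,k} A_{n,k} = 0$. Concretely, I will choose $n \in \bbN$ with $2^{-n} < \min\{\lambda t,\, \sqrt{t/2}\}$, set $s = 2^{-n}$ (which lies in $A_{n,0}$ since $0 \in C$), and take $p = (s,0,t)$. Then $2s^{2} < t$ so $p \in \Upsilon$, $|(x',y')| = s \in A_{n,0}$, and $d_{e}(p,q) = s < \lambda t = \lambda d_{e}(q,0)$. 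The existence of such an $n$ requires $\lambda t < \sqrt{t/2}$, which holds precisely when $t < 1/(2\lambda^{2})$; this explains why the conclusion only kicks in for $q$ near $0$, exactly as needed for non-porosity at the point $0$.
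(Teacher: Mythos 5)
Your argument is correct, and it rests on the same basic device as the paper's proof --- radially rescale the horizontal component so that its norm lands in some $A_{n,k}$ via Lemma \ref{intobs}(3), and adjust the vertical coordinate so as to stay inside the cusp $\Upsilon$ --- but the execution differs in ways worth recording. The paper uses a single one-parameter family $q_{s}$, shrinking the horizontal norm by $s$ while increasing $t$ by the same $s$, and lets $s$ range over an interval of length comparable to $\lambda|(x,y)|$ so that the swept horizontal norms contain a full window $[\theta,\theta+4\theta^{2}]$; you instead split on whether $q\in\Upsilon$ and perturb by only $O(|(x,y)|^{2})$, applying Lemma \ref{intobs}(3) directly at $\theta\approx|(x,y)|$, which makes the distance estimate essentially immediate, since $d_{e}(p,q)=O(|(x,y)|^{2})<\lambda|(x,y)|\leq\lambda d_{e}(q,0)$ once $|(x,y)|$ is small. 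Your version also treats the degenerate case $q=(0,0,t)$ explicitly; the paper's formula for $q_{s}$ requires $|(x,y)|>0$ and silently omits the $t$-axis, so this subcase is a genuine improvement rather than pedantry (and your reflection argument reducing to $t\geq 0$ is a cleaner justification of the paper's ``without loss of generality''). One small slip in your closing remark: the existence of $n$ with $2^{-n}<\min\{\lambda t,\sqrt{t/2}\}$ does not ``require $\lambda t<\sqrt{t/2}$'' --- it only requires that the minimum be positive, which holds for every $t>0$ --- so the claimed restriction $t<1/(2\lambda^{2})$ is not actually needed there; this is harmless to the proof.
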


\begin{proof}
Let $\lambda>0$. It suffices to show that for any $(x,y,t) \in \mathbb{H}^{1}$ with $d_{e}(x,y,t)$ sufficiently small:
\begin{equation}\label{nonEporous1}
B_{e}( (x, y ,t), \lambda d_{e}(x, y, t))\cap P_{c} \neq \varnothing.
\end{equation}

Without loss of generality assume $t\geq 0$. It will be clear from the proof that a similar argument works if $t<0$. For $0\leq s<|(x,y)|$, let
\begin{equation}\label{pcdefqs}
q_{s}:=\left( x-\frac{sx}{|(x,y)|}, y-\frac{sy}{|(x,y)|}, t+s \right).
\end{equation}
If $t<0$, one can instead choose $t-s$ as the final coordinate in the definition of $q_{s}$. Clearly $d_{e}(q_{s},(x,y,t))=s\sqrt{2}$. Hence $s< \lambda |(x,y)|/\sqrt{2}$ implies
\[q_{s}\in B_{e}( (x, y ,t), \lambda d_{e}(x, y, t)).\]
Next notice
\begin{equation}\label{usefula}
 \left| \left(x-\frac{sx}{|(x,y)|}, y-\frac{sy}{|(x,y)|}\right) \right|=|(x,y)|-s.
 \end{equation}
Using the definition of $\Upsilon$, \eqref{pcdefqs} and \eqref{usefula}, we see $q_{s}\in \Upsilon$ if and only if
\begin{equation}\label{tosatisfy1}
t+s\geq 2(|(x,y)|-s)^2.
\end{equation}
Since $t\geq 0$, \eqref{tosatisfy1} holds whenever
\begin{equation}\label{tosatisfy2}
s\geq 2(|(x,y)|-s)^2.
\end{equation}
Ensure $(x,y,t)$ is chosen with $d_{e}(x,y,t)$ small enough so that \eqref{tosatisfy2} holds if $s=\lambda |(x,y)|/2\sqrt{2}$. Then \eqref{tosatisfy2} holds, and hence $q_{s}\in \Upsilon$, whenever $s$ satisfies
\begin{equation}\label{srange}
\frac{\lambda |(x,y)|}{2\sqrt{2}} \leq s< \frac{\lambda |(x,y)|}{\sqrt{2}}.
\end{equation}

Finally we observe that if $d_{e}(x,y,t)$ is sufficiently small, then the interval
\[\left( |(x,y)|- \frac{\lambda |(x,y)|}{\sqrt{2}},\ |(x,y)| - \frac{\lambda |(x,y)|}{2\sqrt{2}} \right)\]
will contain a subinterval of the form $[\theta, \theta + 4\theta^{2}]$ for some $\theta \in (0,1)$, hence by Lemma \ref{intobs} intersect some set $A_{n,k}$. Using \eqref{pcdefqs}, \eqref{usefula} and the definition of $P_{c}$, this implies $q_{s}\in P_{c}$ for some $s$ satisfying \eqref{srange}. For such $s$, we have
\[q_{s}\in B_{e}( (x, y ,t), \lambda d_{e}(x, y, t))\cap P_{c},\]
which proves \eqref{nonEporous1}.
\end{proof}

\begin{remark}
We expect that in any Carnot group there exist sets which are porous in each distance (CC or Euclidean) but not even $\sigma$-porous in the other distance. Such constructions and their justifications may be complicated, since it is harder to show a set is not $\sigma$-porous than to show it is not porous: one may need to use Foran systems or similar techniques \cite[Lemma 4.3]{Zaj87}.
\end{remark}

\section{Non-differentiability on a $\sigma$-Porous Set}\label{porousnondifferentiability}

In this section we construct a Lipschitz function which is subdifferentiable at no point of a given $\sigma$-porous set in a Carnot group (Theorem \ref{nonsubdiff}). We first give basic properties of (Pansu) subdifferentiability (Definition \ref{subdiff}), which are simple adaptations of similar statements in Banach spaces \cite{HMZW97}.

\begin{proposition}\label{basicproperties}
The following statements hold:
\begin{enumerate}
\item $f\colon \bbG\to \bbR$ is Pansu differentiable at $a\in \bbG$ if and only if $f$ and $-f$ are both Pansu subdifferentiable at $a$.
\item Suppose $f$ and $g$ are Pansu subdifferentiable at $a$ and $\lambda>0$. Then $f+g$ and $\lambda f$ are Pansu subdifferentiable at $a$.
\item If 
\[\limsup_{h\to 0} \frac{f(ah)+f(ah^{-1})-2f(a)}{d_{c}(h)}>0\]
then $-f$ is not Pansu subdifferentiable at $a$.
\item $f\colon \bbR \to \bbR$ is subdifferentiable at $a$ if and only if $f_{+}(a)\geq f^{-}(a)$, where 
\[f_{+}(a)=\liminf_{t\to 0+} \frac{f(a+t)-f(a)}{t}\]
denotes the lower right Dini derivative, and 
\[f^{-}(a)=\limsup_{t\to 0-} \frac{f(a+t)-f(a)}{t}\]
denotes the upper left Dini derivative.
\item Suppose $L\colon \bbG\to \bbR$ is a non-zero group linear map. If $H\colon \bbR\to \bbR$ is not subdifferentiable at $L(a)$, then $H\circ L$ is not Pansu subdifferentiable at $a$. 
\item If $f\colon \bbG\to \bbR$ attains a local minimum at a point $x_0 \in\bbG$, then $f$ is Pansu subdifferentiable at $x_0$.
\end{enumerate}
\end{proposition}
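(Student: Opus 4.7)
Each of the six parts will be handled largely independently, using only the definition of (Pansu) subdifferentiability together with the basic identities $L(h^{-1})=-L(h)$ and $L(\delta_r(h))=rL(h)$ ($r>0$) available for any group linear $L$, and the fact $d_c(h^{-1})=d_c(h)$. Parts (2) and (6) are essentially immediate: for (2), superadditivity of $\liminf$ gives subdifferentiability of $f+g$ with differential $L_f+L_g$, and $\lambda>0$ leaves the sign of the quotient unchanged, so $\lambda f$ is subdifferentiable with $\lambda L_f$ (both $L_f+L_g$ and $\lambda L_f$ are clearly group linear). For (6), if $f$ has a local minimum at $x_0$ then $(f(x_0 h)-f(x_0))/d_c(h)\geq 0$ for all small $h$, so the zero map serves as a Pansu subdifferential.

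Parts (1) and (3) both hinge on pairing the subdifferential inequality at $h$ with its instance at $h^{-1}$. For (1), the forward direction is trivial; for the reverse, assume $f$ and $-f$ are subdifferentiable at $a$ with group linear maps $L_1,L_2$. Adding the two defining inequalities gives $\limsup_{h\to 0}(L_1+L_2)(h)/d_c(h)\leq 0$; replacing $h$ by $h^{-1}$ flips the sign of the numerator while leaving $d_c$ invariant, yielding the matching $\liminf\geq 0$. Thus $(L_1+L_2)(h)/d_c(h)\to 0$; but this ratio is invariant under the dilations $\delta_r$ (numerator and denominator both scale by $r$), so it must vanish identically, forcing $L_2=-L_1$. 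Substituting $L_2=-L_1$ back into the two one-sided inequalities pinches the quotient $(f(ah)-f(a)-L_1(h))/d_c(h)$ to zero, giving Pansu differentiability. For (3), if $-f$ were subdifferentiable at $a$ with differential $L$, summing the inequality at $h$ with the one at $h^{-1}$ causes the $L$-contributions to cancel (since $L(h)+L(h^{-1})=0$) and produces $\limsup_{h\to 0}(f(ah)+f(ah^{-1})-2f(a))/d_c(h)\leq 0$, contradicting the hypothesis.

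Part (4) is essentially one-variable bookkeeping. Subdifferentiability with constant $c$ means $f(a+t)-f(a)-ct\geq -\varepsilon|t|$ for every $\varepsilon>0$ and all small $t$; dividing by $t>0$ and sending $t\to 0^+$ yields $f_{+}(a)\geq c$, while dividing by $t<0$ (which flips the inequality) and sending $t\to 0^-$ yields $c\geq f^{-}(a)$. Conversely, any $c\in[f^{-}(a),f_{+}(a)]$ is readily verified to satisfy the defining estimate on both sides of $a$.

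Part (5) is the step I expect to require the most care. Given a non-zero group linear $L$, I would first pick $h_1\in\bbG$ with $L(h_1)=1$ (rescale any element outside $\ker L$). Because dilations are defined only for positive scalars, the parametrization of small scalars $s$ by elements of $\bbG$ must be asymmetric: set $h(s)=\delta_s(h_1)$ for $s>0$ and $h(s)=\delta_{|s|}(h_1^{-1})$ for $s<0$. A short case-check confirms $L(h(s))=s$, $d_c(h(s))=|s|\,d_c(h_1)$, and, crucially, $M(h(s))=sM(h_1)$ for any group linear $M$; this last identity relies on $M(h_1^{-1})=-M(h_1)$ to align the two sign conventions. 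If $H\circ L$ were Pansu subdifferentiable at $a$ with group linear differential $M$, substituting $h=h(s)$ into the defining inequality and using $L(ah(s))=L(a)+s$ would yield subdifferentiability of $H$ at $L(a)$ with constant $c=M(h_1)$, contradicting the hypothesis. The asymmetric treatment of the sign of $s$ and the verification that $L$ and $M$ behave consistently across it is the subtle point of the whole proposition.
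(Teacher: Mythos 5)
Your proof is correct, and parts (1), (2), (3) and (6) follow essentially the same route as the paper (for (1) the paper also derives $L_{2}=-L_{1}$ from the added inequalities via dilation invariance, merely using $v\mapsto v^{-1}$ after the dilation step rather than before; for (3) it phrases the same cancellation as a disjunction of two negative liminfs). The genuine divergence is in parts (4) and (5). The paper does not prove (4) at all --- it simply cites Holicky--Maly--Weil--Zajicek --- whereas you supply the short Dini-derivative bookkeeping; the only caveat is that your converse ``pick any $c\in[f^{-}(a),f_{+}(a)]$'' needs a \emph{finite} $c$, which can fail if both Dini derivatives are $+\infty$ (harmless here, since every application in the paper is to Lipschitz $H$). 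For (5) the paper reduces to the one-dimensional statement through (4): it finds $X\in V_{1}$ with $L(\exp(X))\neq 0$ (using that $\bbG$ is generated by horizontal exponentials), forms $f(t)=H(L(a\exp(tX)))$, and computes $f_{+}(0)<f^{-}(0)$ by a case analysis on the sign of $L(\exp(X))$. Your argument is instead a direct restriction: normalizing $L(h_{1})=1$ and parametrizing $h(s)=\delta_{s}(h_{1})$ for $s>0$, $h(s)=\delta_{|s|}(h_{1}^{-1})$ for $s<0$, you read off from the putative Pansu subdifferential $M$ of $H\circ L$ an honest subdifferential constant $M(h_{1})$ for $H$ at $L(a)$, contradicting the hypothesis without invoking (4) or any sign splitting. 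This is cleaner and slightly more self-contained; the paper's route has the mild advantage of staying on an actual horizontal line of $\bbG$, but both rest on the same elementary fact that a liminf over all $h\to 0$ bounds the liminf over the chosen one-parameter subfamily.
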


\begin{proof} 
For (1), clearly Pansu differentiability of $f$ implies subdifferentiability of $f$ and $-f$. We check the opposite implication. Suppose $f$ and $-f$ are Pansu subdifferentiable. Then there exist group linear maps $L_{1}, L_{2}\colon \bbG \to \bbR$ such that
\begin{equation}\label{fsub} \liminf_{h\to 0} \frac{f(x_{0}h)-f(x_{0})-L_{1}(h)}{d_{c}(h)}\geq 0,\end{equation}
\begin{equation}\label{mfsub} \liminf_{h\to 0} \frac{f(x_{0})-f(x_{0}h)-L_{2}(h)}{d_{c}(h)}\geq 0.\end{equation}
Adding \eqref{fsub} and \eqref{mfsub} yields
\begin{equation}\label{used1} \liminf_{h\to 0} \frac{-L_{1}(h)-L_{2}(h)}{d_{c}(h)}\geq 0.\end{equation}
For each $v\in \bbG$, let $h=\delta_{t}(v)$. Group linearity of $L_{1}$ and $L_{2}$ implies
\begin{equation}\label{used2}\frac{-L_{1}(h)-L_{2}(h)}{d_{c}(h)}=\frac{-L_{1}(v)-L_{2}(v)}{d_{c}(v)}.\end{equation}
Letting $t\to 0$ and using \eqref{used1} and \eqref{used2} yields $-L_{1}(v)-L_{2}(v)\geq 0$. Hence $L_{1}(v)+L_{2}(v)\leq 0$ for every $v\in \bbG$. Replacing $v$ by $v^{-1}$ yields the opposite inequality $L_{1}(v)+L_{2}(v)\geq 0$ for every $v\in \bbG$. Hence $L_{2}=-L_{1}$. Pansu differentiability of $f$ follows directly from this equality, \eqref{fsub} and \eqref{mfsub}.

Statement (2) is trivial.

Suppose the condition in (3) holds but $-f$ is Pansu subdifferentiable at $a$ with corresponding map $L$. Since $L$ is group linear, $L(h^{-1})=-L(h)$. Hence
\begin{align*}
&\frac{-f(ah)+f(a)-L(h)}{d_{c}(h)}+\frac{-f(ah^{-1})+f(a)-L(h^{-1})}{d_{c}(h)}\\
&\qquad =-\frac{f(ah)+f(ah^{-1})-2f(a)}{d_{c}(h)}.
\end{align*}
Consequently either
\[ \liminf_{h\to 0} \frac{-f(ah)+f(a)-L(h)}{d_{c}(h)}<0\]
or
\[ \liminf_{h\to 0} \frac{-f(ah^{-1})+f(a)-L(h^{-1})}{d_{c}(h)}<0.\]
This contradicts Pansu subdifferentiability of $-f$ at $a$, proving (3).

Statement (4) is exactly as stated in \cite{HMZW97}.

We now prove (5). Since $H$ is not subdifferentiable at $L(a)$, we know by (4) that $H_{+}(L(a))<H^{-}(L(a))$. By \cite[Theorem 19.2.1]{BLU07}, every element of $\bbG$ is a product of elements of the form $\exp(X)$ with $X\in V_1$. Hence, since $L$ is a non-zero group linear map, there exists $X\in V_{1}$ such that $L(\exp(X)) \neq 0$. To show $H\circ L$ is not Pansu subdifferentiable at $a$, we show $f\colon \bbR\to \bbR$ given by $f(t)=H(L(a\exp(tX)))$ is not subdifferentiable at $0$. Notice:
\begin{align*}
\frac{f(t)-f(0)}{t}&=\frac{H(L(a\exp(tX)))-H(L(a))}{t}\\
&= \frac{H(L(a)+tL(\exp(X)))-H(L(a))}{tL(\exp(X))}\cdot L(\exp(X)).
\end{align*}
If $L(\exp(X))>0$ then
\[f_{+}(0)=H_{+}(L(a))L(\exp(X))\]
and
\[f^{-}(0)=H^{-}(L(a))L(\exp(X)).\]
Hence $f_{+}(0)<f^{-}(0)$, so $f$ is not subdifferentiable at $0$. If $L(\exp(X))<0$, then
\[f_{+}(0)=H^{-}(L(a))L(\exp(X))\]
and
\[f^{-}(0)=H_{+}(L(a))L(\exp(X))),\]
so again $f_{+}(0)<f^{-}(0)$. We conclude that $f$ is not subdifferentiable at $0$. Hence (5) holds. 

We now verify (6). If $f$ has a local minimum at $x_0$, then $f(x_0 h)\geq f(x_0)$ for all $h\in\bbG$ with $d_{c}(h)$ sufficiently small. Therefore:
\[\frac{f(x_0 h)-f(x_0)}{d_{c}(h)}\geq 0\quad \mbox{ for } h\in\bbG\setminus\{0\} \mbox{ with }d_{c}(h)\mbox{ sufficiently small},\]
and (6) follows.
\end{proof}

The following lemma \cite{HMZW97} will be used in Section \ref{gradientproblem}. There is no similar statement if $\bbR$ is replaced by $\bbR^n$ ($n>1$) or $\mathbb{H}^{n}$: these spaces admit measure zero sets containing points of (Pansu) differentiability for every real-valued Lipschitz function \cite{Pre90, PS16.1, LDPS17}.

\begin{lemma}\label{nullnonsubdiff}
Let $Z\subset \bbR$ have Lebesgue measure zero. Then there exists a Lipschitz function on $\bbR$ which is subdifferentiable at no point of $Z$.
\end{lemma}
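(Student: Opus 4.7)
By Proposition 4.1(4), the conclusion ``$f$ is not subdifferentiable at $a$'' reduces to $f_+(a) < f^-(a)$. Hence the plan is to construct a Lipschitz $f\colon \bbR\to\bbR$ such that $f_+(a) < f^-(a)$ for every $a\in Z$. A geometrically clean sufficient condition is that $f$ admit a \emph{downward angular corner} at each $a\in Z$: for each such $a$, $f(a+h)\leq f(a)-c|h|$ for some $c=c(a)>0$ and all sufficiently small $|h|$. Such a corner directly forces $f_+(a)\leq -c$ and $f^-(a)\geq c$, yielding the strict inequality $f^-(a)-f_+(a)\geq 2c>0$.

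The construction I would attempt is to build $f$ as a rapidly convergent sum of scale-indexed tent functions, exploiting $\mathcal{L}^1(Z)=0$ in an essential way. For each $n\in\bbN$, I would choose a countable family $\mathcal{C}_n$ of open intervals, each of length at most $4^{-n}$, covering $Z$, with total length $\sum_{I\in\mathcal{C}_n}|I|\leq 4^{-n}$, and arranged so that each interval is \emph{centered at a point of $Z$}. The centering can be achieved by starting from any standard small-measure open cover of $Z$ and replacing each interval $J$ meeting $Z$ by the interval of length $2|J|$ centered at a chosen point of $J\cap Z$; a one-dimensional Besicovitch-type selection then extracts a subfamily of bounded multiplicity. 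On each $I\in\mathcal{C}_n$ I would place the standard $1$-Lipschitz tent $\phi_I$ with peak at the centre, set $\psi_n=\sum_{I\in\mathcal{C}_n}\phi_I$ (Lipschitz with constant controlled by the multiplicity), and define $f(x)=\sum_{n\geq 1}2^{-n}\psi_n(x)$, which converges uniformly to a Lipschitz function.

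The main obstacle will be verifying $f_+(a)<f^-(a)$ at \emph{every} $a\in Z$, not just at the countably many interval centres produced by the covers. At any point that is a tent centre, the tent produces a direct downward corner and the asymmetry $(\psi_n)^-(a)-(\psi_n)_+(a)=2$ survives summation against the weights $2^{-n}$ (the off-centre tents covering $a$ are locally linear, contributing equally to left and right Dini derivatives and cancelling out of the difference). The subtle case is an $a\in Z$ that is only \emph{near} chosen centres at each scale: the contributing tents are then all locally linear at $a$ and may in principle balance to give symmetric Dini derivatives. Overcoming this is where the measure-zero hypothesis does its real work, through a refinement of the covers $\mathcal{C}_n$ that ensures every point of $Z$, including uncountably many accumulation points, acquires a genuine downward corner at infinitely many scales. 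Carrying out this bookkeeping --- balancing the decay of $|\mathcal{C}_n|$, the multiplicity bound of the Besicovitch selection, and the weights $2^{-n}$ --- is the technical heart of the argument and the step that would require the most care.
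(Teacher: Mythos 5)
The paper does not prove Lemma \ref{nullnonsubdiff}; it imports it from \cite{HMZW97}, so your task was to supply an actual argument, and what you have written is a plan whose decisive step is explicitly left open. Before that, there is a concrete error in the framing: your ``geometrically clean sufficient condition'' --- that $f(a+h)\leq f(a)-c|h|$ for \emph{all} sufficiently small $|h|$ --- cannot be realized when $Z$ is uncountable (e.g.\ a measure-zero Cantor set), because it makes every $a\in Z$ a strict local maximum of $f$, and the set of strict local maxima of any function $\bbR\to\bbR$ is countable (to each such $a$ assign a rational interval on which $a$ is the unique maximizer; the assignment is injective). The condition must be weakened to a drop of slope $c$ along sequences $h_j\downarrow 0$ and $h_j'\uparrow 0$, which still gives $f_+(a)\leq -c<c\leq f^-(a)$ via Proposition \ref{basicproperties}(4); but this weakening to isolated scales is precisely what makes the construction delicate, since the corner must now be detected through the contributions of all other scales.

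The step you defer --- arranging that \emph{every} $a\in Z$, not only the countably many tent centres, sees a concave corner at infinitely many scales --- is the entire content of the lemma, and your covers do nothing to secure it. A point $a$ lying in $I\in\mathcal{C}_n$ close to an endpoint of $I$ sees from $\phi_I$ a \emph{convex} corner: writing $s$ for the distance to that endpoint, $\phi_I(a+t)+\phi_I(a-t)-2\phi_I(a)=t-s>0$ for $s<t<\mathrm{dist}(a,\text{peak})$, which is local-minimum-type behaviour and pushes towards subdifferentiability (compare Proposition \ref{basicproperties}(6)), not away from it; nothing in your selection prevents a point of $Z$ from occupying such a position at every level $n$. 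Centring the intervals at points of $Z$ is beside the point: what matters is where $a$ sits inside the intervals covering it, not whether their centres belong to $Z$. There is also a quantitative clash in the chosen parameters: the concave-corner gain available from level $n$ is at most $2^{-n}t$ with $t\leq 4^{-n}$, i.e.\ of order $8^{-n}$, which is the same order as the total contribution $\sum_{j>n}2^{-j}\,O(4^{-j})$ of all finer levels, so even a favourably placed corner is not visibly preserved by the sum. Repairing this requires either a different mechanism or an inductive construction in which $\mathcal{C}_{n+1}$ is chosen adaptively relative to the corner structure of $\psi_1,\dots,\psi_n$ together with a much faster decay of lengths relative to weights; as written, the proposal identifies the difficulty correctly but does not overcome it.
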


In Banach spaces which admit a suitable bump function, one can construct a Lipschitz function which is differentiable at no point of any given $\sigma$-porous set. We show that the same is true in a general Carnot group. Our proof is a modification of \cite[Lemma 2]{HMZW97}. The next lemma is \cite[Lemma 3]{HMZW97}, where the set is assumed to be `uniformly porous'. In the present paper `uniformly porous' simply means `porous', since our definition of porosity already requires the relative size of holes (i.e. the parameter $\lambda$ in Definition \ref{def_porous}) to be uniform over the porous set. We denote the closed ball of center $x$ and radius $r>0$ in a metric space by $\overline{B}(x,r)$.

\begin{lemma}\label{cover}
Let $M$ be a metric space and $E$ be a closed porous subset of $M$. Then there exists $C>1$ and $S\subset M\times (0,1)$ such that the family $\mathcal{B}=\{ \overline{B}(x,r)\colon (x,r)\in S\}$ is disjoint, $\bigcup \mathcal{B} \cap E=\varnothing$, and for each $\delta>0$:
\[ \bigcup \mathcal{B} \cup \bigcup \{\overline{B}(x,Cr)\colon (x,r)\in S,\ r<\delta \}=M.\]
\end{lemma}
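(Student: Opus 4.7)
The plan is to construct $\mathcal{B}$ scale by scale using Zorn's lemma and then verify the covering property case by case. Let $\lambda\in(0,1)$ be the porosity constant of $E$ and fix a margin $K>1$ (say $K=2$). For each $n\geq 1$ set $r_n=2^{-n}$ and
\[
\mathcal{F}_n=\{\overline{B}(x,r_n):d(x,E)\geq Kr_n\}.
\]
By Zorn's lemma I would inductively choose a maximal pairwise disjoint family $\mathcal{B}_n\subseteq\mathcal{F}_n$ each of whose members is also disjoint from every ball already chosen in $\mathcal{B}_1\cup\cdots\cup\mathcal{B}_{n-1}$. Setting $\mathcal{B}=\bigcup_n\mathcal{B}_n$ and $S=\{(x,r_n):\overline{B}(x,r_n)\in\mathcal{B}_n\}$, the disjointness of $\mathcal{B}$ and the identity $\bigcup\mathcal{B}\cap E=\varnothing$ follow immediately, since $K>1$. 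Fix $C=2K/\lambda+2$, and for the covering property take $\delta>0$ and $p\in M\setminus\bigcup\mathcal{B}$.

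Case 1, $p\in E$. Porosity furnishes $y\in M$ with $\epsilon:=d(y,p)$ arbitrarily small and $d(y,E)\geq\lambda\epsilon$. I would choose $n$ so that $r_n\leq\lambda\epsilon/K<2r_n$, forcing $\overline{B}(y,r_n)\in\mathcal{F}_n$. Maximality of $\mathcal{B}_n$ yields either $\overline{B}(y,r_n)\in\mathcal{B}_n$, whence $(y,r_n)\in S$ satisfies $d(p,y)<2Kr_n/\lambda\leq Cr_n$, or $\overline{B}(y,r_n)$ meets some $\overline{B}(x',r')\in\mathcal{B}_m$ with $m\leq n$ and $r'\geq r_n$. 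In the second subcase, combining $d(x',E)\geq Kr'$ (since $p\in E$) with the triangle inequality gives
\[
Kr'\leq d(x',p)\leq\epsilon+r_n+r'<\frac{2Kr_n}{\lambda}+r_n+r',
\]
from which $r'\leq r_n(2K+\lambda)/(\lambda(K-1))$, so $r'<\delta$ once $\epsilon$ is small enough, and $d(p,x')\leq Cr'$ using $r_n\leq r'$.

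Case 2, $p\in(M\setminus E)\setminus\bigcup\mathcal{B}$. Here $d(p,E)>0$, so $\overline{B}(p,r_n)\in\mathcal{F}_n$ for all large $n$. By maximality of $\mathcal{B}_n$, this ball meets some $\overline{B}(x'_n,r'_n)\in\mathcal{B}$ with $r'_n\geq r_n$, giving an intersection point $z_n\in\bigcup\mathcal{B}$ with $d(z_n,p)\leq r_n\to 0$. If only finitely many distinct matching balls arose, then some $(x^*,r^*)$ would occur for infinitely many $n$ and continuity of $d$ would give $p\in\overline{B}(x^*,r^*)\subseteq\bigcup\mathcal{B}$, a contradiction. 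Hence along a subsequence the matching balls are all distinct, and I would extract a further subsequence with $r'_{n_k}\to 0$: pairwise disjointness and the bound $d(x'_{n_k},p)\leq r'_{n_k}+r_{n_k}$ place the centres in a bounded region around $p$ with pairwise separations exceeding twice any lower bound on $r'_{n_k}$, and in a doubling metric space this excludes infinitely many distinct such balls. For $k$ large, $r'_{n_k}<\delta$ and $d(p,x'_{n_k})\leq 2r'_{n_k}\leq Cr'_{n_k}$, completing the verification.

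The main obstacle is the extraction step at the end of Case 2: in an arbitrary metric space one cannot exclude pairwise disjoint balls of bounded radius accumulating near a point of $M\setminus(E\cup\bigcup\mathcal{B})$, but for the applications in this paper the ambient space is a Carnot group, whose CC distance is doubling (see the proof of Proposition \ref{porousnull}), so the extraction succeeds. The margin $K>1$ in the definition of $\mathcal{F}_n$ is what makes Case 1 work, by providing the controlling inequality $d(x',E)\geq Kr'$ that ties the matching radius $r'$ back to the test radius $r_n$.
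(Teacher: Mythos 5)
The paper does not actually prove this lemma; it is quoted verbatim from \cite{HMZW97} (Lemma 3 there), so your argument can only be measured against the statement itself. Your construction --- dyadic radii $r_n=2^{-n}$, families $\mathcal{F}_n$ of balls whose centres keep a safety margin $Kr_n$ from $E$, and maximal disjoint subfamilies chosen scale by scale --- is a natural one, and your Case 1 is correct: the arithmetic giving $r'<r_n(2K+\lambda)/(\lambda(K-1))$ and $d(p,x')\leq (2K/\lambda+2)r'$ checks out, and porosity of $E$ is used exactly where it must be, to produce admissible centres at arbitrarily small scales near a point of $E$.

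The gap you flag in Case 2 is genuine, and it is the crux of the lemma. For $p\notin E\cup\bigcup\mathcal{B}$, maximality only tells you that each test ball $\overline{B}(p,r_n)$ meets some chosen ball of radius $r'_n\geq r_n$; in an arbitrary metric space nothing forces a subsequence of these blocking radii to tend to $0$, since infinitely many pairwise disjoint balls of one fixed dyadic radius $r^*$ can have centres accumulating on the set $\{x: d(p,x)=r^*\}$ (their centres are $2r^*$-separated but need not be finite in number in a bounded region). Your doubling patch is correct and does legitimately cover every use of the lemma in this paper: it is applied only to $M_i=\bbG\setminus(\overline{E_i}\setminus E_i)$ with the CC metric, and the doubling property (more precisely, the finiteness of $2\rho$-separated subsets of bounded sets, which is all you invoke) is inherited by subspaces of $\bbG$. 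But as a proof of the lemma as stated --- for an arbitrary metric space $M$ --- the argument is incomplete; closing it requires either a different selection scheme in which points with $d(\cdot,E)>0$ that are not eventually absorbed into $\bigcup\mathcal{B}$ are necessarily approached by chosen balls of vanishing radius, or an appeal to the construction in \cite{HMZW97}. You should state explicitly that you are proving the lemma only for doubling $M$, and record the observation that this suffices for Theorem \ref{nonsubdiff}.
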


\begin{definition}
A \emph{bump function} $b\colon \bbG\to \bbR$ is a Lipschitz function which is everywhere Pansu differentiable, has compact support, is non-negative and is not identically zero.
\end{definition}

\begin{lemma}
There exists a bump function $b\colon \bbG \to \bbR$ with $\mathrm{Lip}(b)\leq 1$.
\end{lemma}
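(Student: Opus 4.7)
The plan is to import a standard Euclidean bump function and verify directly that it has all the required properties, using the identification $\bbG=(\bbR^n,\cdot)$. Fix any $\tilde b\in C_c^\infty(\bbR^n)$ with $\tilde b\geq 0$ and $\tilde b\not\equiv 0$; for concreteness take $\tilde b(x)=\eta(|x|_e^2)$, where $|\cdot|_e$ is the Euclidean norm and $\eta\in C_c^\infty(\bbR)$ is non-negative and not identically zero. Since $d_c$ and the Euclidean distance induce the same topology on $\bbR^n$, $\tilde b$ has CC-compact support; non-negativity and non-triviality are immediate. It then remains to bound the Lipschitz constant of $\tilde b$ with respect to $d_c$ and to verify Pansu differentiability everywhere.

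The horizontal vector fields $X_1,\ldots,X_m$ are smooth in exponential coordinates of the first kind, and $\tilde b\in C^\infty(\bbR^n)$, so the horizontal derivatives $X_i\tilde b$ are continuous on $\bbG$. Since $\tilde b$ has compact support, $K:=\sup_{x\in\bbG}|\nabla_H\tilde b(x)|$ is finite. The key observation is that for any horizontal curve $\gamma\colon[0,1]\to\bbG$ joining $x_0$ to $x_0h$ with control $u=(u_1,\ldots,u_m)$, writing out $X_i$ as a first-order operator on $\bbR^n$ shows that the $i$-th component of $\gamma'(t)$ equals $u_i(t)$ for every $i\leq m$. Hence for $i\leq m$,
\[
(x_0h)_i-(x_0)_i=\int_0^1 u_i(t)\dd t,
\]
which combined with \eqref{grouplaw} (recall $\mathcal{R}_i\equiv 0$ for $i\leq m$) gives $h_i=\int_0^1 u_i(t)\dd t$, i.e.
\[
\int_0^1 u(t)\dd t=p(h).
\]
Applying the Euclidean chain rule $(\tilde b\circ\gamma)'(t)=\langle\nabla_H\tilde b(\gamma(t)),u(t)\rangle$ and integrating yields $|\tilde b(x_0h)-\tilde b(x_0)|\leq K L(\gamma)$; taking the infimum over such $\gamma$ gives $\mathrm{Lip}(\tilde b)\leq K$ with respect to $d_c$. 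Moreover $K>0$, for otherwise $\tilde b$ would be constant along every horizontal curve and hence, by the Chow--Rashevskii theorem, constant on $\bbG$, contradicting compactness of support and non-triviality.

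For Pansu differentiability at an arbitrary $x_0\in\bbG$, pick for each small $h$ a horizontal curve $\gamma_h$ from $x_0$ to $x_0h$ with $L(\gamma_h)\leq 2d_c(h)$. Then $\gamma_h$ stays within CC-distance $2d_c(h)$ of $x_0$, so by continuity of $\nabla_H\tilde b$ at $x_0$ together with $\|u_h\|_{L^1}=L(\gamma_h)\leq 2d_c(h)$,
\[
\tilde b(x_0h)-\tilde b(x_0)=\int_0^1\langle\nabla_H\tilde b(\gamma_h(t)),u_h(t)\rangle\dd t=\Big\langle\nabla_H\tilde b(x_0),\int_0^1 u_h(t)\dd t\Big\rangle+o(d_c(h)).
\]
Substituting $\int_0^1 u_h\dd t=p(h)$ from the previous paragraph gives $\tilde b(x_0h)-\tilde b(x_0)=\langle\nabla_H\tilde b(x_0),p(h)\rangle+o(d_c(h))$, which is exactly the formula \eqref{gradform} for the Pansu differential.

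Finally, set $b:=\tilde b/K$. Then $b$ has $\mathrm{Lip}(b)\leq 1$ with respect to $d_c$, is Pansu differentiable at every point of $\bbG$, has compact support, is non-negative, and is not identically zero, as required. The main technical point is the identity $\int_0^1 u_h\dd t=p(h)$ for horizontal curves, which is the geometric mechanism relating horizontal controls to the Pansu differential; once it is recorded, both the Lipschitz bound and Pansu differentiability follow from routine chain-rule estimates.
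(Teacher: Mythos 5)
Your proof is correct, but it takes a more self-contained route than the paper. The paper simply takes a Euclidean $C^1$ bump with small Euclidean Lipschitz constant and then quotes two known facts: that classically $C^1$ functions are Pansu differentiable with continuous derivative \cite[Remark 5.9]{FSSC2}, and that $d_c$ dominates a multiple of $d_e$ on compact sets \cite{Monti}, which converts the Euclidean Lipschitz bound into a CC one. You instead reprove both ingredients by hand: the identity $\int_0^1 u_h\dd t=p(h)$ for the control of any horizontal curve joining $x_0$ to $x_0h$ (which is correct --- by \eqref{grouplaw} the left-invariant fields $X_i$, $i\leq m$, have $j$-th component $\delta_{ij}$ for $j\leq m$, so the first $m$ components of $\gamma'$ are exactly the $u_i$), together with the chain rule along horizontal curves, gives both the sharp CC Lipschitz bound $\mathrm{Lip}_{d_c}(\tilde b)=\sup|\nabla_H\tilde b|$ and, via continuity of $\nabla_H\tilde b$ and curves of length $\leq 2d_c(h)$, the expansion $\tilde b(x_0h)-\tilde b(x_0)=\langle\nabla_H\tilde b(x_0),p(h)\rangle+o(d_c(h))$, i.e.\ Pansu differentiability with differential as in \eqref{gradform}. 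Your normalization $b=\tilde b/K$ with $K>0$ (justified correctly via Chow--Rashevskii) then gives $\mathrm{Lip}(b)\leq 1$, whereas the paper achieves this by shrinking the Euclidean Lipschitz constant in advance. What your approach buys is independence from the two external citations and an explicit identification of the CC Lipschitz constant; what it costs is having to carry out, inside this lemma, essentially the standard proof that $C^1$ functions on a Carnot group are everywhere Pansu differentiable. Either argument is acceptable.
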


\begin{proof}
Let $b\colon \bbR^{n}\to \bbR$ be a non-negative $C^{1}$ function (in the classical sense) with compact support which is not identically zero. We can choose $b$ to have Lipschitz constant as small as we desire, without changing the support of $b$. Classically $C^{1}$ functions are Pansu differentiable with continuously varying derivative \cite[Remark 5.9]{FSSC2}, so $b$ is also Pansu differentiable. 

The CC distance is bounded below by a multiple of the Euclidean distance on compact sets \cite{Monti}, so $b$ is also Lipschitz with respect to the CC distance in the domain. Since the Euclidean Lipschitz constant could be made arbitrarily small, we can ensure the CC Lipschitz constant is at most 1. 
Finally, the CC distance and the Euclidean distance induce the same topology \cite{Monti}, so $b$ has compact support also with respect to the CC distance.
\end{proof}

\begin{theorem}\label{nonsubdiff}
Let $E$ be a $\sigma$-porous subset of $\bbG$. Then there is a Lipschitz function $f\colon \bbG\to \bbR$ which is Pansu subdifferentiable at no point of $E$. 
\end{theorem}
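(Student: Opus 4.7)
The plan is to adapt the Banach-space construction of \cite[Lemma~2]{HMZW97}: build a Lipschitz function $f$ whose limsup second-difference (from Proposition \ref{basicproperties}(3)) is strictly positive at every $a \in E$, so that $-f$ is Pansu subdifferentiable at no point of $E$.

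First I would set up the countable construction. Write $E = \bigcup_{n=1}^\infty E_n$ with each $E_n$ porous; by an enlargement argument analogous to the $G_\delta$ trick used in the proof of Proposition \ref{porousnull}, I may assume each $E_n$ is closed. Applying Lemma \ref{cover} to each $E_n$ produces a disjoint family $\mathcal{B}_n = \{\overline{B}(x_i^{(n)}, r_i^{(n)}) : i \in I_n\}$ of closed balls in $\bbG \setminus E_n$ whose $C$-dilates at arbitrarily small scale cover $E_n$; after discarding balls whose $C$-dilate fails to meet $E_n$, I may further assume the size property $d_c(x_i^{(n)}, E_n) \leq C r_i^{(n)}$. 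Fix a bump function $b$ with $\operatorname{Lip}(b) \leq 1$, $b(0)>0$, and support strictly inside the open CC unit ball, and set
\[
f_n(y) = \sum_{i \in I_n} r_i^{(n)}\, b\bigl(\delta_{1/r_i^{(n)}}\bigl((x_i^{(n)})^{-1} y\bigr)\bigr), \qquad f = \sum_{n=1}^\infty 2^{-n} f_n.
\]
Each summand in $f_n$ is a non-negative Lipschitz bump whose support is strictly inside the corresponding ball; the balls being disjoint yields $\operatorname{Lip}(f_n) \leq \operatorname{Lip}(b) \leq 1$, $f_n \geq 0$, and $f_n|_{E_n} \equiv 0$, so $f$ is Lipschitz with constant $\leq 1$. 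The size property implies that a sufficiently small neighbourhood of any $a \notin E_n$ meets only finitely many balls from $\mathcal{B}_n$ and is separated from the strict supports of the corresponding bumps, so $f_n$ coincides locally with either $0$ or a single smooth bump; in particular $f_n$ is Pansu differentiable at every $a \notin E_n$.

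To verify the criterion of Proposition \ref{basicproperties}(3) at $a \in E$, pick $m$ with $a \in E_m$ and set $R_n(h) = (f_n(ah) + f_n(ah^{-1}) - 2f_n(a))/d_c(h)$, so the ratio for $f$ is $R(h) = \sum_n 2^{-n} R_n(h)$. For indices $n$ with $a \in E_n$, the identities $f_n(a)=0$ and $f_n \geq 0$ force $R_n(h) \geq 0$. For indices $n$ with $a \notin E_n$, Pansu differentiability of $f_n$ at $a$ together with group-linearity $df_n(a)(h) + df_n(a)(h^{-1}) = 0$ yield $R_n(h) \to 0$ as $h \to 0$; combined with the uniform bound $|R_n(h)| \leq 2$ and an $\varepsilon/2$ tail estimate for $\sum 2^{-n}$, this gives $\sum_{n: a \notin E_n} 2^{-n} R_n(h) \to 0$. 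For the distinguished index $n=m$, the covering condition at scales $\delta \downarrow 0$ supplies balls $(x_{i_k}, r_{i_k}) \in \mathcal{B}_m$ with $r_{i_k} \to 0$ and $a \in \overline{B}(x_{i_k}, C r_{i_k})$; setting $h_k = a^{-1} x_{i_k}$ gives $d_c(h_k) \leq C r_{i_k}$ and $f_m(ah_k) = f_m(x_{i_k}) = r_{i_k} b(0)$, so $R_m(h_k) \geq b(0)/C$. Since $f_m(ah_k^{-1}) \geq 0$, combining these three observations yields $\limsup_{h \to 0} R(h) \geq 2^{-m} b(0)/C > 0$, and Proposition \ref{basicproperties}(3) delivers the desired non-subdifferentiability of $-f$ at $a$.

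The principal obstacle is establishing Pansu differentiability of $f_n$ at each $a \notin E_n$, which rests on local finiteness of $\mathcal{B}_n$ near $a$ and on the size property $d_c(x_i^{(n)}, E_n) \leq C r_i^{(n)}$ built into (or imposed upon) Lemma \ref{cover}; the strict-interior support of the bump $b$ then collapses $f_n$ locally to a smooth function or to zero. A minor secondary technicality is the reduction from $\sigma$-porous to countable unions of closed porous sets, which is handled by the same enlargement construction appearing in the proof of Proposition \ref{porousnull}.
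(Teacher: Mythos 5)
Your construction is essentially the one in the paper: the same disjointly supported rescaled bumps $f_n$, the same weighting $\sum 2^{-n}f_n$, and the same use of Proposition \ref{basicproperties}(3) via the second-difference quotient. However, there is a genuine gap at the very first step: the claim that ``by an enlargement argument analogous to the $G_\delta$ trick \ldots I may assume each $E_n$ is closed.'' The $G_\delta$ trick produces a porous \emph{$G_\delta$} superset, not a closed one, and in general a porous set is \emph{not} contained in any closed porous set. For a concrete obstruction in $\bbR$: take $F$ closed, nowhere dense and not $\sigma$-porous (as in \cite[Theorem 2.3]{Zaj05}) and let $D$ be the set of endpoints of the complementary intervals of $F$. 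Then $D$ is dense in $F$ and is $\lambda$-porous for every $\lambda<1$ (the adjacent gap supplies holes at all small scales), yet any closed porous set containing $D$ would contain $\overline{D}=F$ and force $F$ to be porous, a contradiction. Since Lemma \ref{cover} requires the porous set to be \emph{closed} in the ambient metric space, your appeal to it is not justified as written. The paper's fix is different: it applies Lemma \ref{cover} with $E_n$ viewed as a subset of the smaller metric space $M_n=\bbG\setminus(\overline{E}_n\setminus E_n)$, in which $E_n$ is relatively closed and still porous, and then checks separately that the resulting open balls remain disjoint in $\bbG$.

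The same issue infects your ``principal obstacle'': Pansu differentiability of $f_n$ at \emph{every} $a\notin E_n$ rests on $d_c(a,E_n)>0$ to get local finiteness, and this fails at points of $\overline{E}_n\setminus E_n$, where balls of $\mathcal{B}_n$ with radii tending to $0$ may accumulate. Fortunately you do not need that much: the dichotomy used in the paper is that either $a$ lies in one of the \emph{open} balls of the family, in which case $f_n$ coincides near $a$ with a single smooth bump and is Pansu differentiable there, or $a$ lies in no such ball, in which case $f_n(a)=0$ and $f_n\ge 0$ already give $R_n(h)\ge 0$ for all $h$. Either way $\liminf_{h\to 0}R_n(h)\ge 0$, which is all your final $\limsup$ estimate requires (you only need $\limsup(2^{-m}R_m+\sum_{n\ne m}2^{-n}R_n)\ge 2^{-m}\limsup R_m+\liminf\sum_{n\ne m}2^{-n}R_n$). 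With the subspace trick replacing the false closedness reduction, and this weaker dichotomy replacing the global differentiability claim, the rest of your argument (the lower bound $R_m(h_k)\ge b(0)/C$ along $h_k=a^{-1}x_{i_k}$, the tail estimate, and the passage to $-f$) is correct and matches the paper.
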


\begin{proof}
Let $b$ be a bump function on $\bbG$. By composing with group translations and dilations if necessary, we may assume that $b(0)=\beta>0$, $b$ is supported in $B(0,1)$ and $b$ is $1$-Lipschitz. Write $E=\cup_{i=1}^{\infty}E_{i}$ where each set $E_{i}$ is porous. We apply Lemma \ref{cover} to each set $E_{i}$ considered as a subset of the metric space $M_{i}=\bbG\setminus (\overline{E}_{i}\setminus E_{i})$. Then $E_{i}$ is porous and closed in $M_{i}$. Choose $S_{i}$ and $C_{i}>1$ corresponding to $E_{i} \subset M_{i}$ using Lemma \ref{cover}.

For each $i$, the family $\mathcal{B}_{i}^{\ast}=\{B(x,r)\colon (x,r)\in S_{i}\}$ is disjoint, where the balls are defined in $\bbG$ (not the subspace $M_{i}$). Indeed, if two members of $\mathcal{B}_{i}^{\ast}$ did intersect in $\bbG$ then, using Lemma \ref{cover} and the definition of $M_{i}$, this intersection must be in $\overline{E_{i}}$. Since the intersection is open, it would contain a point of $E_{i} \subset M$ which is impossible.

Lemma \ref{cover} implies that for every $\delta>0$,
\[\bigcup \{\overline{B}(x,C_{i}r)\colon (x,r)\in S_{i},\ r<\delta \}\supset E_{i}.\]
For each $i$, define $f_{i}\colon \bbG\to \bbR$ by:
\[f_{i}(x) = \begin{cases} 0 & \mbox{if } x\notin \bigcup \mathcal{B}_{i}^{\ast}, \\ rb(\delta_{1/r}(y^{-1}x)) & \mbox{if } x\in B(y,r),\ (y,r)\in S_{i}. \end{cases}\]
Since the CC distance is invariant under left translations, compatible with dilations and $b$ is $1$-Lipschitz, it follows $x\mapsto rb(\delta_{1/r}(y^{-1}x))$ is also $1$-Lipschitz. Each map $f_{i}$ is a supremum of $1$-Lipschitz functions, hence $1$-Lipschitz. Since also $S_{i}\subset \bbG\times (0,1)$, we have $0\leq f_{i}\leq 2$. The maps $f_{i}$ are Pansu differentiable on $\bigcup \mathcal{B}_{i}^{\ast}$ because $b$ is Pansu differentiable on $\bbG$. Clearly also $f_{i}(x)=r\beta$ for $(x,r)\in S_{i}$. 

Suppose $x\in E_{i}$. Then for arbitrarily small $r>0$, we find $(z,r)\in S_{i}$ such that $x\in \overline{B}(z,C_{i}r)$. Hence $d_{c}(x,z)\leq C_{i}r$ and $f_{i}(z)=r\beta$, which implies that $f_{i}(z)/d_{c}(x,z)\geq \beta/C_{i}$. We can write $z=xh$, where $h=x^{-1}z$ and $d_{c}(h)\leq C_{i}r$. Since $E_{i}\cap \bigcup \mathcal{B}_{i}^{\ast}=\varnothing$, we know $f_{i}(x)=0$. Letting $r\to 0$ gives:
\begin{equation}\label{july11a}\limsup_{h \to 0} \frac{f_{i}(xh)+f_{i}(xh^{-1})-2f_{i}(x)}{d_{c}(h)}\geq \frac{\beta}{C_{i}}>0.\end{equation}

If $x\in \bigcup \mathcal{B}_{i}^{\ast}$ then Pansu differentiability of $f_{i}$ at $x$ implies:
\begin{equation}\label{july11b}\lim_{h\to 0} \frac{f_{i}(xh)+f_{i}(xh^{-1})-2f_{i}(x)}{d_{c}(h)}=0.\end{equation}

If $x\notin \bigcup \mathcal{B}_{i}^{\ast}$ then $f_{i}(x)=0$ implies
\begin{equation}\label{july11c}\liminf_{h\to 0} \frac{f_{i}(xh)+f_{i}(xh^{-1})-2f_{i}(x)}{d_{c}(h)}\geq 0.\end{equation}

Define $f\colon \bbG\to \bbR$ by
\[f(x)=\sum_{i=1}^{\infty} \frac{f_{i}(x)}{2^{i}}.\]
Since each $f_{i}$ is $1$-Lipschitz, $f$ is $1$-Lipschitz. Let $j\in \bbN$ and $x\in E_{j}$. Then, for any $J>j$,
\begin{align*}
&\limsup_{h\to 0} \frac{f(xh)+f(xh^{-1})-2f(x)}{d_{c}(h)}\\
&\qquad \geq \limsup_{h\to 0} \frac{1}{2^j} \frac{f_{j}(xh)+f_{j}(xh^{-1})-2f_{j}(x)}{d_{c}(h)}\\
&\qquad \qquad + \liminf_{h\to 0} \sum_{\substack{1\leq i\leq J\\ i\neq j}} \frac{1}{2^i} \frac{f_{i}(xh)+f_{i}(xh^{-1})-2f_{i}(x)}{d_{c}(h)}\\
&\qquad \qquad -\sum_{i=J+1}^{\infty} \frac{2}{2^i}\\
&\qquad \geq \frac{\beta}{C_{j}}-\frac{1}{2^{J-1}}\\
&\qquad >0
\end{align*}
for $J$ sufficiently large, using \eqref{july11a}, \eqref{july11b}, \eqref{july11c} and that each $f_{i}$ is $1$-Lipschitz. Proposition \ref{basicproperties}(3) then asserts that $-f$ is not Pansu subdifferentiable on $E$, which proves the theorem.
\end{proof}

A universal differentiability set in $\bbG$ is a subset $A\subset \bbG$ such that every Lipschitz function $f\colon \bbG\to \bbR$ is Pansu differentiable at a point of $A$. Theorem \ref{nonsubdiff} gives the following corollary showing that, as in the Euclidean case, universal differentiability sets are far from being porous.

\begin{corollary}\label{udsnotporous}
A universal differentiability set in a Carnot group cannot be $\sigma$-porous.
\end{corollary}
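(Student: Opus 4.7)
The proof is a short contradiction argument built directly on Theorem \ref{nonsubdiff}, so the plan is essentially to unwind the definitions.

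First I would assume, toward a contradiction, that some universal differentiability set $A\subset \bbG$ is $\sigma$-porous. Apply Theorem \ref{nonsubdiff} to $E:=A$ to produce a Lipschitz function $f\colon \bbG\to\bbR$ that fails to be Pansu subdifferentiable at every point of $A$.

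Next I would invoke Proposition \ref{basicproperties}(1), which asserts that Pansu differentiability at a point implies Pansu subdifferentiability of both $f$ and $-f$ there. In particular, the contrapositive gives: if $f$ is not Pansu subdifferentiable at $x$, then $f$ is not Pansu differentiable at $x$. Applying this pointwise on $A$ shows that $f$ is Pansu differentiable at no point of $A$.

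This directly contradicts the defining property of a universal differentiability set, namely that every real-valued Lipschitz function on $\bbG$ admits at least one point of Pansu differentiability inside $A$. Hence no universal differentiability set can be $\sigma$-porous. There is essentially no obstacle here: Theorem \ref{nonsubdiff} does all the work, and the only bookkeeping is the standard observation that subdifferentiability is a strictly weaker condition than differentiability.
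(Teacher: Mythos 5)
Your argument is correct and is exactly the one the paper intends (the paper leaves the corollary as an immediate consequence of Theorem \ref{nonsubdiff}): the function produced by that theorem is nowhere Pansu subdifferentiable on the set, hence nowhere Pansu differentiable on it by Proposition \ref{basicproperties}(1), contradicting the universal differentiability property. Nothing is missing.
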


\section{The Horizontal Gradient}\label{gradientproblem}

We next use Theorem \ref{nonsubdiff} and arguments from \cite{HMZW97} to prove Theorem \ref{gradienttheorem}. Roughly, this states that preimages of open sets under the horizontal gradient are far from being $\sigma$-porous. We first prove a useful lemma. Recall that $m$ is the dimension of the horizontal layer $V_{1}$ of $\bbG$.

\begin{lemma}\label{usefullemma}
Let $B(z,r)\subset \bbG$ be an open ball and $E\subset B(z,r)$. Fix parameters $v,r, \rho, \theta >0$ such that $8m\theta<\rho r v$. Let $b\colon \bbG \to \bbR$ be a 1-Lipschitz bump function satisfying $b(0)=v$ and supported in $B(0,1)$. Suppose there exists a continuous function $F \colon \overline{B}(z,r) \to \bbR$ such that:
\begin{itemize}
\item $|F(x)|\leq \theta$ for each  $x\in B(z,r)$.
\item The horizontal gradient $\nabla_{H} F(x)\in \bbR^{m}$ exists at each $x\in B(z,r)$, and $|\nabla_{H} F(x)|>\rho$ for each $x\in B(z,r)\setminus E$.
\end{itemize}
Then every Lipschitz function $h\colon \bbG\to \bbR$ is Pansu subdifferentiable at a point of $E$.
\end{lemma}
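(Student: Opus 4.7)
The plan is to construct a Lipschitz perturbation $g$ of $h$ on $\overline{B}(z,r)$ whose infimum is attained at an interior point $x_0 \in E$, from which Pansu subdifferentiability of $h$ at $x_0$ will follow. Set the translated and rescaled bump
\[
 b_{z,r}(x) := r\,b\bigl(\delta_{1/r}(z^{-1} x)\bigr),
\]
which is $1$-Lipschitz, Pansu differentiable on $\bbG$, supported in $\overline{B}(z,r)$, and satisfies $b_{z,r}(z)=rv$ (the properties are verified exactly as in the proof of Theorem~\ref{nonsubdiff}). I would then define
\[
 g(x) := h(x) - c\,b_{z,r}(x) + A F(x)\,b_{z,r}(x),
\]
for parameters $A,c>0$ to be chosen. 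Since $g\equiv h$ on $\partial B(z,r)$ while $g(z)\le h(z)-(c-A\theta)rv$, choosing $c$ sufficiently large in terms of $\mathrm{Lip}(h)$, $A\theta$, and $v$ guarantees $g(z)<\inf_{\partial B(z,r)} g$, so $g$ attains its infimum at an interior point $x_0 \in B(z,r)$; moreover the same inequality, for $c$ large, also forces $b_{z,r}(x_0) \ge rv/2$.

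The crucial step is to show that $x_0\in E$. Suppose to the contrary that $x_0\in B(z,r)\setminus E$, so $|\nabla_H F(x_0)|>\rho$. Pansu differentiability of $b_{z,r}$ and existence of $\nabla_H F(x_0)$ imply that the perturbation $\phi := -c\,b_{z,r} + AF\,b_{z,r}$ has horizontal directional derivatives at $x_0$ satisfying the product rule. Applying the minimum condition of $g=h+\phi$ along the coordinate direction $X_j$ with $|X_j F(x_0)| \ge |\nabla_H F(x_0)|/\sqrt{m} > \rho/\sqrt{m}$, combined with the Lipschitz bounds $|\nabla_H h(x_0)|\le\mathrm{Lip}(h)$ where it exists, $|\nabla_H b_{z,r}|\le 1$, and $|F|\le\theta$, together with a one-sided argument to handle the a.e.~nature of $\nabla_H h$, yields
\[
 A\,b_{z,r}(x_0)\,\rho/\sqrt{m}\;\le\;\mathrm{Lip}(h)+c+A\theta.
\]
Using $b_{z,r}(x_0)\ge rv/2$ and the hypothesis $8m\theta<\rho r v$, and choosing $A$ of the order of $\mathrm{Lip}(h)$, this inequality is violated, giving the desired contradiction. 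The factor $8m$ precisely absorbs the dimensional loss $\sqrt{m}$, the slack between $b_{z,r}(x_0)$ and $rv$, and the constants from the one-sided minimum argument.

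Once $x_0\in E$ is established, Proposition~\ref{basicproperties}(6) gives that $g$ is Pansu subdifferentiable at $x_0$ with differential zero. The natural candidate differential of $h$ at $x_0$ is the group-linear map $L(y):=-\langle \nabla_H \phi(x_0),\,p(y)\rangle$ coming from formula~\eqref{gradform}. Writing $h = g - \phi$ and using Pansu differentiability of $b_{z,r}$, verification of the Pansu subdifferentiability inequality~\eqref{subdiff2} for $h$ reduces to a ``Pansu super-differentiability'' statement for $\phi$ at $x_0$ with candidate differential $-L$. The main obstacle I expect is precisely this last step: because $F$ is only assumed to be continuous with horizontal gradient, $\phi$ need not be Pansu differentiable at $x_0$, so one cannot simply peel $\phi$ off $g$. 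The super-differentiability inequality must be established directly from Pansu differentiability of $b_{z,r}$, continuity of $F$, and existence of $\nabla_H F(x_0)$, exploiting that the candidate $L$ is determined by the horizontal gradient alone via~\eqref{gradform} together with the structure of group-linear maps on $\bbG$.
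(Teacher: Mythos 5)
Your construction of the perturbation is genuinely different from the paper's: you keep $h$ at full strength and add $\phi=-c\,b_{z,r}+AF\,b_{z,r}$ with large constants $c,A$, whereas the paper rescales $h$ to $\widetilde h:=ch$ with $c$ \emph{small} (so that $|\widetilde h|\le\theta/2$ and $\mathrm{Lip}(\widetilde h)\le\rho/4m$) and minimizes $H=F-\eta\widetilde b+\widetilde h$ for $4\theta/v<\eta<\rho r/2m$ --- the window that $8m\theta<\rho rv$ provides --- thereby avoiding the product $F\cdot b_{z,r}$ altogether. Your interior-minimum argument, the localization $b_{z,r}(x_0)\ge rv/2$, and the directional-derivative contradiction for $x_0\notin E$ all go through (the constants close because $8m\theta<\rho rv$ is strict, so $A$ may be taken large after $c$ is fixed as a multiple of $A\theta+\mathrm{Lip}(h)$), and they are a correct analogue of the paper's argument.

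The genuine gap is the final step, which you flag but do not close: passing from ``$g=h+\phi$ has an interior minimum at $x_0\in E$'' to ``$h$ is Pansu subdifferentiable at $x_0$'' requires $-\phi$ to be Pansu subdifferentiable at $x_0$ (Proposition \ref{basicproperties}(2)), i.e.\ $\phi$ to be superdifferentiable there. Your proposed remedy --- deriving this ``directly'' from continuity of $F$ and existence of $\nabla_H F(x_0)$ --- cannot work: existence of the $m$ horizontal directional derivatives at a point gives no one-sided control of difference quotients away from the coordinate directions. Already in $\bbG=\bbR^2$ the function $F(x)=\sqrt{|x_1x_2|}$ is continuous with $\nabla_H F(0)=0$, yet $F\cdot b_{z,r}$ is superdifferentiable at $0$ for no group-linear candidate (test along the axes and then the diagonal). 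The paper's proof uses more than the lemma's stated hypotheses at exactly this point: it asserts that $G=F-\eta\widetilde b$ is Pansu differentiable at $x_0$, i.e.\ it invokes Pansu differentiability of $F$ there --- which does hold in the one place the lemma is applied, since $F$ is the first-order Pansu remainder of the Pansu differentiable $f$. Your argument is repaired the same way: once $F$ is Pansu differentiable at $x_0$, so is the product $Fb_{z,r}$ (Leibniz rule for real-valued Pansu differentiable functions), hence so is $\phi$; then $-\phi$ is subdifferentiable by Proposition \ref{basicproperties}(1) and $h=g+(-\phi)$ is subdifferentiable by Proposition \ref{basicproperties}(2). Without that strengthening of the hypothesis on $F$, your final step does not go through.
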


\begin{proof}
Let $\widetilde{b}(x):=b(\delta_{1/r}(z^{-1}x))$. Clearly $\widetilde{b}(z)=v$ and $\widetilde{b}$ is $(1/r)$-Lipschitz and supported in $B(z,r)$. Since $8m\theta<\rho r v$, we can choose $\eta$ satisfying $4\theta/v<\eta <\rho r/2m$. Define
\[G(x):=F(x)-\eta \widetilde{b}(x).\]
If $x\in \partial B(z,r)$ then $\widetilde{b}(x)=0$. Using also $|F(x)|\leq \theta$, we see
\begin{equation}\label{ppp}
|G(x)|=|F(x)-\eta \widetilde{b}(x)|\leq \theta \quad \mbox{ for }x\in \partial B(z,r).
\end{equation}
Moreover,
\begin{equation}\label{pppp}
G(z)=F(z)-\eta \widetilde{b}(z)\leq \theta-\eta v\leq -3\theta.
\end{equation}
Let $h\colon \bbG\to \bbR$ be a Lipschitz function. We intend to prove that $h$ is Pansu subdifferentiable at a point of $E$, for which we may assume that $h$ is not identically zero on $B(z,r)$. Define
\[a:=\sup_{\overline{B}(z,r)} |h|, \qquad c:=\min\left\{\frac{\theta}{2a},\frac{\rho}{4 m \mathrm{Lip}(h)}\right\}.\]
Then $\widetilde{h}(x):= c h(x)$ satisfies
\begin{align}\label{ineq1}
&|\widetilde{h}(x)|\leq \theta/2 \quad  \mbox{for } x\in \overline{B}(z,r),\\
\label{ineq2}
&\widetilde{h}\ \mbox{is}\  (\rho/4m)\mbox{-Lipschitz on}\  \overline{B}(z,r).
\end{align}
Let $ H(x):=G(x)+\widetilde{h}(x)$ for $x\in \overline{B}(z,r)$. By \eqref{ppp} and \eqref{ineq1}, we see
\begin{align}\label{p}
H(x)\geq -3\theta/2\quad \mbox{for } x\in\partial B(z,r).
\end{align}
Using \eqref{pppp} and \eqref{ineq1}, we get 
\begin{align}\label{q}
H(z)\leq -5\theta/2.
\end{align}
Putting together \eqref{p} and \eqref{q} we infer that $H$ attains its minimum at a point $x_0\in B(z,r)$. By Proposition \ref{basicproperties}(6), $H$ is Pansu subdifferentiable at $x_0$. Since $G$ is Pansu differentiable at $x_0$, $\widetilde{h}$ is Pansu subdifferentiable at $x_0$. 

We claim that $x_0\in E$. Suppose $x_0\notin E$. Then by the properties of $F$ in the statement of the lemma, we know $|\nabla_{H}F(x_0)|>\rho$. By the definition of the horizontal gradient, there is $1\leq i\leq m$ such that $|X_i F(x_0)|>\rho/m$. Without loss of generality we suppose $X_i F(x_0)<-\rho/m$: otherwise in what follows consider directional derivatives in direction $-X_{i}$ instead of in direction $X_{i}$. Since $\widetilde{b}$ is $(1/r)$-Lipschitz and $4\theta/v<\eta <\rho r/2m$, we have
\[|X_i(\eta \widetilde{b})(x_0)|\leq \mathrm{Lip}(\eta \widetilde{b})<\rho/2m.\]
Thus, since $G=F-\eta \widetilde{b}$,
\[X_iG(x_0)=X_iF(x_0)-X_i(\eta \widetilde{b})(x_0)<-\rho/2m.\]
To conclude, notice $H=G+\widetilde{h}$, $X_iG(x_0)<-\rho/2m$ and $\mathrm{Lip}(\widetilde{h})\leq \rho/4m$. This implies that $H$ does not attain its minimum at $x_0$, a contradiction. Hence $x_{0}\in E$, so $h$ is Pansu subdifferentiable at a point of $E$.
\end{proof}

\begin{theorem}\label{gradienttheorem}
Let $D\subset \bbG$ be an open set and $f\colon D\to \bbR$ be Pansu differentiable. Denote $g(x)=\nabla_{H} f(x)$ for $x\in D$ and suppose $G\subset \bbR^{m}$ is an open set such that $g^{-1}(G)\neq \varnothing$. Then the following statements hold:
\begin{enumerate}
\item $g^{-1}(G)$ is porous at none of its points.
\item If $T\subset \bbG$ is open and $T\cap g^{-1}(G)\neq \varnothing$, then $L(T\cap g^{-1}(G))$ has positive Lebesgue measure for every non-zero group linear $L~\colon~\bbG~\to~\bbR$. In particular, the one-dimensional Hausdorff measure of $T\cap g^{-1}(G)$ with respect to the CC metric is positive.
\item If $T\subset \bbG$ is open and $T\cap g^{-1}(G)\neq \varnothing$, then $T\cap g^{-1}(G)$ is not $\sigma$-porous.
\end{enumerate}
\end{theorem}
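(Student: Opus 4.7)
All three parts will be reduced to a single application of Lemma \ref{usefullemma}, using one common auxiliary function $F$. Fix $x_0\in g^{-1}(G)$ and choose $\rho>0$ with $B(g(x_0),2\rho)\subset G$. Let $L_0(x):=\langle g(x_0),p(x)\rangle$; this is group linear because the Baker--Campbell--Hausdorff formula gives $p(xy)=p(x)+p(y)$ (since $\mathcal{R}_i\equiv 0$ for $i\leq m$) and $p(\delta_r x)=rp(x)$. Set $F(x):=f(x)-L_0(x)-f(x_0)+L_0(x_0)$. Then $F(x_0)=0$, \eqref{gradform} gives $\nabla_H F(x)=g(x)-g(x_0)$, so $|\nabla_H F(x)|\geq 2\rho$ for every $x\notin g^{-1}(G)$, and $F$ is Pansu differentiable at $x_0$ with $dF(x_0)=0$. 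The last property is the crucial quantitative input: for every $\varepsilon>0$ there is $\delta>0$ with $|F(y)|\leq \varepsilon d_c(y,x_0)$ whenever $d_c(y,x_0)<\delta$, which is what is needed to satisfy the scale-sensitive inequality $8m\theta<\rho r v$ in Lemma \ref{usefullemma}.

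\textbf{Part (1).} Suppose for contradiction that $g^{-1}(G)$ is $\lambda$-porous at $x_0$, and choose porosity balls $B(z_n,r_n)$ with $r_n=\lambda d_c(z_n,x_0)\to 0$ and $B(z_n,r_n)\cap g^{-1}(G)=\varnothing$. Because $B(z_n,r_n)\subset B(x_0,(1+\lambda^{-1})r_n)$, the estimate above yields $|F|\leq \varepsilon(1+\lambda^{-1})r_n=:\theta$ on $B(z_n,r_n)$ for $n$ large. With a fixed $1$-Lipschitz bump function $b$ and $v:=b(0)>0$, the hypothesis $8m\theta<\rho r_n v$ reduces to $8m\varepsilon(1+\lambda^{-1})<\rho v$, which can be arranged by choosing $\varepsilon$ small, independently of $n$. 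Lemma \ref{usefullemma} applied with $E=B(z_n,r_n)\cap g^{-1}(G)=\varnothing$ then yields the absurd conclusion that every Lipschitz function on $\bbG$ is Pansu subdifferentiable at some point of $\varnothing$.

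\textbf{Parts (2) and (3).} For these, pick $x_0\in T\cap g^{-1}(G)$ and a radius $r>0$ small enough that $\overline{B}(x_0,r)\subset T\cap D$ and the setup above gives $|F|\leq \varepsilon r=:\theta$ on $B(x_0,r)$ with $8m\varepsilon<\rho v$. Lemma \ref{usefullemma}, applied with $E:=B(x_0,r)\cap g^{-1}(G)\subset T\cap g^{-1}(G)$, then forces every Lipschitz function on $\bbG$ to be Pansu subdifferentiable at some point of $T\cap g^{-1}(G)$. For (3), this immediately contradicts Theorem \ref{nonsubdiff} if $T\cap g^{-1}(G)$ is $\sigma$-porous. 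For (2), if $\mathcal{L}^1(L(T\cap g^{-1}(G)))=0$ for some non-zero group linear $L$, Lemma \ref{nullnonsubdiff} supplies a Lipschitz $H\colon\bbR\to\bbR$ not subdifferentiable on $L(T\cap g^{-1}(G))$, and Proposition \ref{basicproperties}(5) promotes $H\circ L$ to a Lipschitz function on $\bbG$ that is Pansu subdifferentiable at no point of $T\cap g^{-1}(G)$, a contradiction. The Hausdorff-measure consequence then follows because any group linear $L=\langle \nabla_H L,p(\cdot)\rangle$ is CC-Lipschitz: along a horizontal curve $\gamma'=\sum u_iX_i(\gamma)$ one computes $(L\circ\gamma)'=\langle \nabla_H L,u\rangle$, so $|L(h)|\leq |\nabla_H L|\,d_c(h)$, and hence $\mathcal{L}^1(L(A))\leq \mathrm{Lip}(L)\,\mathcal{H}^1_{d_c}(A)$ for any $A\subset\bbG$.

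\textbf{Main obstacle.} The only delicate point is calibrating the inequality $8m\theta<\rho rv$ in Lemma \ref{usefullemma}: a generic continuity or Lipschitz estimate for $F$ would give $|F|\lesssim \mathrm{Lip}(F)\cdot r$, and no choice of $r$ would then let the hypothesis be satisfied, since $\mathrm{Lip}(F)$ depends on $f$ rather than on the radius. The key is that subtracting the group linear correction $L_0$ and a constant kills the Pansu differential of $F$ at $x_0$, so $|F(y)|/d_c(y,x_0)$ can be driven arbitrarily small on small balls around $x_0$; this replaces the fixed Lipschitz constant by an arbitrarily small factor $\varepsilon$, yielding the scale-matching required for all three applications of the lemma.
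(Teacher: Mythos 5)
Your proposal is correct and follows essentially the same route as the paper: subtract the group-linear approximation $\langle g(x_0),p(x_0^{-1}x)\rangle$ so that Pansu differentiability makes $|F|$ small relative to the ball radius, verify the calibration $8m\theta<\rho r v$, apply Lemma \ref{usefullemma}, and derive contradictions from Theorem \ref{nonsubdiff} and from Lemma \ref{nullnonsubdiff} combined with Proposition \ref{basicproperties}(5). The only (cosmetic) difference is that the paper first proves the slightly stronger statement of Remark \ref{remark}(4) for arbitrary sequences of balls $B(z_n,r_n)$ with $r_n>c\,d_c(z_n,a)$ and then deduces (1)--(3), whereas you apply the same estimates directly to porosity balls for (1) and to balls centred at a point of $T\cap g^{-1}(G)$ for (2) and (3).
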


\begin{remark}\label{remark}
Properties (1)--(3) of Theorem \ref{gradienttheorem} hold if and only if the following is true:

\begin{enumerate}
\item[(4)] Suppose $a\in g^{-1}(G)$ and $B(z_{n},r_{n})$ is a sequence of open balls such that $z_{n}\to a$ and $r_{n}>cd_{c}(z_{n},a)$ for some $c>0$ and all $n$. Then there exists $n_{0}$ such that for all $n\geq n_{0}$:
\begin{enumerate}
\item[(a)] $L(g^{-1}(G)\cap B(z_{n},r_{n}))$ has positive Lebesgue measure for every non-zero group linear map $L\colon \bbG\to \bbR$.
\item[(b)] $g^{-1}(G)\cap B(z_{n},r_{n})$ is not $\sigma$-porous.
\end{enumerate}
\end{enumerate}
\end{remark}

\begin{proof}[Proof of Remark \ref{remark}]
We first assume (4) and prove (1)--(3). Suppose that $a\in g^{-1}(G)$, $z_{n}\to a$ and $r_{n}>cd(z_{n},a)$ for some fixed $c$ and every $n\in \bbN$. Then (4b) asserts that $g^{-1}(G)\cap B(z_n,r_n)$ is not $\sigma$-porous for all sufficiently large $n$, in particular it is non-empty. Hence $g^{-1}(G)$ cannot be porous at $a$. This proves (1).

Now let $T\subset\bbG$ be open and $T\cap g^{-1}(G)\neq \varnothing$. Choose $a\in T\cap g^{-1}(G)$, a sequence $z_{n}\to a$ with $z_{n}\neq a$ and let $r_{n}=d_{c}(z_n,a)/2$. Then (4) asserts that for sufficiently large $n$, $L(g^{-1}(G)\cap B(z_n,r_n))$ has positive Lebesgue measure for every group linear map $L$ and $g^{-1}(G)\cap B(z_n,r_n)$ is not $\sigma$-porous. Since $T$ is open, we have
\[ g^{-1}(G)\cap B(z_n,r_n) \subset g^{-1}(G)\cap T\]
for sufficiently large $n$. This yields (2) and (3). The statement about Hausdorff measure in (2) follows because group linear maps are Lipschitz, so the image of a set of one-dimensional Hausdorff measure zero would have one-dimensional Hausdorff measure zero.

We now assume (1)--(3) and prove (4). Suppose $a\in g^{-1}(G)$ and $B(z_{n},r_{n})$ is a sequence of open balls such that $z_{n}\to a$ and $r_{n}>cd_{c}(z_{n},a)$ for some $c>0$ and all $n$. By (1), $B(z_{n},r_{n})\cap g^{-1}(G)\neq \varnothing$ for all sufficiently large $n$. Properties (2) and (3) applied with $T=B(z_n,r_n)$ then give (4a) and (4b) respectively.
\end{proof}

\begin{proof}[Proof of Theorem \ref{gradienttheorem}]
Suppose $a, z_{n}, r_{n}, c$ are as in Remark \ref{remark}(4). Since making $r_{n}$ smaller makes the statement stronger, we may assume $r_{n}\to 0$. Choose a $1$-Lipschitz bump function $b\colon \bbG \to \bbR$ supported in $B(0,1)$ which satisfies $b(0)=v$ for some $v>0$. Since $G$ is open and $g(a)\in G$, there exists $\rho>0$ such that $|g(x)-g(a)|>\rho$ for any $x\not\in g^{-1}(G)$. Using Pansu differentiability of $f$ at $a$ and \eqref{gradform}, which expresses the Pansu derivative in terms of the horizontal gradient, we may find $\delta>0$ such that $d_{c}(x,a)<\delta$ implies
\[ |f(x)-f(a)-\langle g(a), p(a^{-1}x)\rangle |\leq \frac{\rho v d_{c}(x,a)}{16m(1+1/c)}.\]
Let
\[F(x):=f(x)-f(a)-\langle g(a), p(a^{-1}x)\rangle.\]
Choose $n_{0}$ such that $B(z_{n},r_{n})\subset B(a,\delta)$ for $n>n_{0}$. Fix $n>n_{0}$. For every $x\in B(z_{n},r_{n})$ we have, using $r_{n}>cd_{c}(z_{n},a)$ as in Remark \ref{remark}(4),
\begin{align*}
|F(x)| \leq \frac{\rho v d_{c}(x,a)}{16m(1+1/c)} &\leq \frac{\rho v(r_{n}+d_{c}(z_{n},a))}{16m(1+1/c)}\\
&< \frac{\rho v(r_{n}+r_{n}/c)}{16m(1+1/c)}\\
&=\frac{\rho vr_{n}}{16m}.
\end{align*}
For $x\notin g^{-1}(G)$ we have $|\nabla_{H} F(x)|=|g(x)-g(a)|>\rho$. Now the assumptions of Lemma \ref{usefullemma} hold with $z=z_{n}, r=r_{n}$, $\theta=\rho vr_{n}/16m$ and $E=B(z_{n},r_{n})\cap g^{-1}(G)$. Hence every real-valued Lipschitz map on $\bbG$ is Pansu subdifferentiable at a point of $E$.

To prove Remark \ref{remark}(4a), suppose $L(E)$ has measure zero for some non-zero group linear map $L\colon \bbG\to \bbR$. Using Lemma \ref{nullnonsubdiff} we can choose a Lipschitz function $H\colon \bbR \to \bbR$ which is subdifferentiable at no point of $L(E)$. Hence $h=H\circ L$ is a Lipschitz function which, by Proposition \ref{basicproperties}(5), is not Pansu subdifferentiable at any point of $E$. This contradicts Lemma \ref{usefullemma}.

To prove Remark \ref{remark}(4b), suppose $E$ is $\sigma$-porous. Then by Theorem \ref{nonsubdiff} there exists a Lipschitz function which is Pansu subdifferentiable at no point of $E$. This again contradicts Lemma \ref{usefullemma}.
\end{proof}

\end{document}